\documentclass[11pt]{article}

\usepackage[a4paper, total={6.4in, 8.5in}]{geometry}
\usepackage{amsmath,amssymb,latexsym,amsfonts,amsthm}
\usepackage{stmaryrd}
\usepackage[new]{old-arrows}
\usepackage{shapepar,bm}
\usepackage[arrow,matrix,ps,color,line]{xy}
\usepackage{amsfonts,amssymb,amscd,hyperref,amsthm}
\usepackage{geometry}
\usepackage{makeidx}
\usepackage{eulervm}
\usepackage{mathrsfs}

\usepackage{graphicx}
\usepackage{graphics}

\usepackage{epstopdf}


\geometry{letterpaper}

\baselineskip=18pt

\hypersetup{
    colorlinks = true,
    linkcolor = blue,
    anchorcolor = red,
   citecolor = blue,
    filecolor = red,
    urlcolor = blue}

\newcommand{\mathsym}[1]{{}}

\DeclareMathOperator{\Spec}{Spec}

 %

\renewcommand{\H}{\ensuremath{\mathrm{S}}}
\renewcommand{\k}{\mathbf{k}} 
\newcommand{\C}{\ensuremath{\mathcal{C}}}
\newcommand{\proj}{\ensuremath{\mathbb{P}}}
\renewcommand{\l}{\ensuremath{\ell}}
\newcommand{\MS}{\ensuremath{\mathcal{M}_{g,1}^{\H}}} 
\renewcommand{\dim}{\ensuremath{\mathrm{dim}}}%

\newcommand{\LO}{\mathcal{O}} 
\renewcommand{\k}{\mathbf{k}} 
\newcommand{\bfx}{\mathbf{x}} 

\def\balpha{{\bm\alpha}}


\usepackage[usenames]{color}
\definecolor{MyLightMagenta}{cmyk}{0.1,0.8,0,0.1}
\definecolor{MyDarkBlue}{rgb}{0.1,0,0.3}
\definecolor{redb}{rgb}{0.76, 0.13, 0.28}
\definecolor{redb}{rgb}{0.76, 0.13, 0.28}
\definecolor{myblue}{rgb}{0.0, 0.53, 0.74}
\definecolor{tangerine}{rgb}{1.0, 0.66, 0.07}
\hypersetup{colorlinks,linkcolor={myblue},citecolor={myblue},urlcolor={myblue}} 

\makeindex

\hyphenation{spe-ci-fi-ca-tion}
\hyphenation{to-po-lo-gy}
\hyphenation{in-ver-ti-ble}

\setcounter{secnumdepth}{2}


\theoremstyle{plain}
\newtheorem{thm}{Theorem}[section]

\newtheorem{exam}{Example}[section]

\newtheorem{rmk}[thm]{Remark}

\newtheorem{lem}[thm]{Lemma}
\newtheorem{cor}[thm]{Corollary}

\newtheorem*{mainA}{Main Theorem}






\DeclareGraphicsRule{.tif}{png}{.png}{`convert #1 `dirname
#1`/`basename #1 .tif`.png}

\def\no@breaks#1{{\def\\{ \ignorespaces}#1}}    


  \makeatletter
\def\cleardoublepage{\clearpage\if@twoside \ifodd\c@page\else
\hbox{} \thispagestyle{empty}
\newpage
\if@twocolumn\hbox{}\newpage\fi\fi\fi} \makeatother


\usepackage{eso-pic,graphicx}
\makeatletter
\newcommand\BackgroundPicture[2]{%
  \setlength{\unitlength}{1pt}%
  default \put(0,\strip@pt\paperheight){%
  \parbox[t][\paperheight]{\paperwidth}{%
    \vfill
     \centering \includegraphics[angle=#2, width=15cm, height=15cm,  bb=0 0 150 150]{#1}
    \vfill
}}} %
\makeatother



\begin{document}

\date{}
\title{Local Complete Intersections and Weierstrass Points}

\author{Andr\'e Contiero \& Sarah Mazzini
\thanks{During the preparation of this paper the first author was partially support by ICTP-INdAM Research in Pairs Programme, Trieste, Italy and by Funda\c c\~ao de Amparo \`a Pesquisa do Estado
de Minas Gerais (FAPEMIG), Brazil, grant no. APQ-00798-18. The second author was financed in part by the 
Coordena\c c\~ao de Aperfei\c coamento de Pessoal de N\'ivel Superior - Brasil (CAPES) - Finance Code 001.
\smallskip 
\newline  ${}$
\,\,\,\,\,\,\,{\em Keywords and Phrases:} Weierstrass points, Deformation of singularities, local complete intersections, Moduli of curves.
\newline ${}$ \,\,\,\,\,\, {\bf 2020 MSC:} 14H10,\,14H55, 14B07, 14M10.}}

\maketitle

\begin{abstract}
\noindent This work presents a simple proof that the moduli space of complete integral Gorenstein curves with a prescribed symmetric Weierstrass semigroup becomes a weighted projective space, even for fields of positive characteristic, when the associated monomial curve is a local complete intersection.
\end{abstract}


\section{Introduction}\label{Intro}

Given a numerical semigroup $\H\subset\mathbb{N}$ of genus $g\geq 1$, minimally 
generated by $a_1,\dots,a_r$, let $\MS$ be the
moduli space parameterizing smooth pointed curves defined 
over an algebraically closed field $\k$ (or compact Riemann surfaces when $\k=\mathbb{C}$),
whose Weierstrass semigroup at the marked point is $\H$. It is
well known that $\MS$ can be empty depending on $\H$, but when it is non-empty, a major and very classical problem is to describe the moduli space $\MS$ and its compactification.

By allowing singularities, any numerical semigroup $\H$ can be realized as the Weierstrass semigroup 
of a projetivization of the affine monomial curve 
$$\C_{\H}:=\{(t^{a_1},\dots,t^{a_r})\,;\,t\in\k\}\subset\mathbb{A}^r.$$ Herzog \cite{Her70} showed 
that the ideal of $\C_{\H}$ can be generated by polynomials 
in $\k[X_{a_1},\dots,X_{a_r}]$ which are differences of two monomials with the same weighted degree, namely  
\begin{equation}\label{polymain}
    G^{(0)}_{d_j}:=X_{a_1}^{\alpha_{1,j}}\cdots X_{a_r}^{\alpha_{r,j}}-X_{a_1}^{\beta_{1,j}}\cdots X_{a_r}^{\beta_{r,j}},
\end{equation}
where $\alpha_{i,j}\cdot \beta_{i,j}=0$ and $\sum a_i\alpha_{i,j}=\sum a_i\beta_{i,j}$ for $1\leq i\leq r$ and  $1\leq j\leq m$.

The purpose of this paper is to establish the following result:

\begin{mainA}\label{thmA}
If $\H$ is such that the affine monomial curve $\C_{\H}=\Spec\k[\H]$ is a local complete intersection and $\mathrm{char}(\k)=0$ or a prime not dividing any exponent $\alpha_{i,j}$ and $\beta_{i,j}$ of the defining equations of $\C_{\H}$, then a compactification
$$\overline{\MS}=\proj(\mathbf{T}^{1,-}),$$ 
it is constructed and the closure is compounded by integral Gorenstein curves with a smooth point whose Weierstrass semigroup is $\H$. 
\end{mainA}

The vector space $\mathbf{T}^{1,-}$ 
stands for the negatively graded part of the first module of the cotangent complex associated to the semigroup algebra $\k[\H]=\oplus_{n\in\H}\k t^n\,$, $$\mathbf{T}^{1}(\k[\H])=\mathbf{T}^{1,-}(\k[\H])\oplus \mathbf{T}^{1,+}(\k[\H]).$$ 

We recall that a numerical semigroup $\H$ is a complete intersection if the affine monomial curve $\C_{\H}$ is a complete intersection in
$\mathbb{A}^r$, where $r$ is the embedding dimension of $\C_{\H}$, i.e. the smallest number of elements required to generate $\H$. Equivalently, the semigroup algebra $\k[\H]$ is a complete intersection when we consider it as the quotient of $\k[X_{a_1},\dots,X_{a_r}]$ by the kernel $\mathbf{I}$ of the surjective map
$$
\begin{array}{rcl}
\k[X_{a_1},\dots,X_{a_r}] & \longrightarrow & \k[\H] \\
X_{a_i} & \longmapsto & t^{a_i},
\end{array}
$$ and $\mathbf{I}$ is the defining ideal of $\C_{\H}\subset\mathbb{A}^r$.

The affine monomial curve $\C_{\H}$ has a unique unibranch singular point at the origin $\mathbf{0}$, with singularity degree $g=g(\H)$. Therefore $\C_{\H}$, or even its closure in a suitable (weighted) projective space, is a local complete intersection if and only if the local ring of its unique singularity is a complete intersection. Since $\C_{\H}$ is affine and a locally complete intersection, a minimal free resolution of the local ring singularity lifts to a minimal free resolution of the semigroup algebra $\k[\H]$, and hence $\C_{\H}$ is a global complete intersection in $\mathbb{A}^r$.

If $\k[\H]$ is a complete intersection, then there are no obstructions to formally deform $\C_{\H}$ 
in characteristic zero, the second cohomology module of the cotangent complex associated to 
$\C_{\H}$ is null, $\mathbf{T}^{2}_{\C_{\H}}=0$, as shown in \cite{LS}. Hence, we can conclude that 
$\MS$ is smooth. Furthermore, the base space $\mathcal{T}^{-}$ of the miniversal deformation in 
negative degrees is an affine space $\mathbb{A}^{N}$. Therefore, we can deduce that a closure of 
$\MS$ is also a projective space, whenever we apply Pinkham's construction of $\MS$ for smooth 
fibers $\mathcal{X}^{-}\rightarrow\mathcal{T}^{-}$ of the miniversal deformation (see Section 
\ref{secHauser} for more details). The advantage of our techniques is that the proof of the Main Theorem presented here is rather explicit and simple, and it also works for fields of 
positive characteristic and describes the curves that compound the boundary.

A rather simple proof of the Main Theorem in characteristic zero can be obtained as follows.
\begin{proof}
The dimension of $\MS$ is at least
$2g-1-\dim\mathbf{T}^{1,+}$, c.f. \cite[Theorem 2.4]{CFSV}.
Since the monomial curve in $\proj^{g-1}$ associated to $\C_{\H}$ is a local complete intersection and $\mathrm{char}(\k)=0$, we are able to show 
that $\dim\mathbf{T}^{1,-}=2g-\dim\mathbf{T}^{1,+}$, meaning that the Tjurina number of a complete intersection singularity is $2g$. At this point we just have to apply the results due to Stoehr and Contiero--Stoehr \cite{St93,CS} 
assuring that $\overline{\MS}$ is a closed subset of $\proj(\mathbf{T}^{1,-})$.   
\end{proof}

The way we prove the Main Theorem for also fields of positive characteristic is to apply a variant of Hauser's algorithm (see \cite{Hau83,Hau85} and \cite{Stev13}) by deforming the affine monomial curve $\C_{\H}\subset\mathbb{A}^r$ instead of the associated canonical Gorenstein monomial curve in $\proj^{g-1}$, as required by Stoehr's original construction (\cite{St93}). The first step is to take the unfold of the $r-1$ defining equations of the ideal of $\C_{\H}$. Next, since $\C_{\H}$ is a complete intersection, we can show that no relations between the unfolded coefficients arise from syzygies, with the exception of $\frac{1}{2}r(r+1)$ normalizations to zero. This is where the condition on the characteristic of the ground field appears. Hence, the closure of the moduli space ${\MS}$ is $\proj(V)$, where $V$ is the $\k$-vector space spanned by the normalized unfolded coefficients. Finally, we just need to note that V is in bijection with $\mathbf{T}^{1,-}$, c.f. \cite[Appendix]{St93}.

We obtain the following two naive and immediate consequences of the above Main Theorem, provided that $\mathrm{char}(\k)=0$ or a prime not dividing any exponent $\alpha_{i,j}$ and $\beta_{i,j}$ of the defining equations of $\C_{\H}$.

\begin{cor}[Schlessinger \cite{SchPhD} and Pinkham \cite{Pi74}]
A complete intersection numerical semigroup is 
realized as a Weierstrass semigroup of a smooth curve.
\end{cor}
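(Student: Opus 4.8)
The plan is to read this off from the Main Theorem, the only substantive point being the non-emptiness of the smooth locus $\MS$. First I would record that, by definition, a complete intersection numerical semigroup $\H$ is one for which $\C_\H$ is a complete intersection in $\mathbb{A}^r$; as explained in the paragraph preceding the Main Theorem, this is equivalent to $\C_\H$ being a local complete intersection. The standing characteristic hypothesis (characteristic $0$, or a prime dividing none of the exponents $\alpha_{i,j}$, $\beta_{i,j}$) is in force, so the Main Theorem applies and produces the compactification $\overline{\MS}=\proj(\mathbf{T}^{1,-})$, a weighted projective space whose points parameterize integral Gorenstein curves carrying a smooth point with Weierstrass semigroup $\H$.

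Next I would identify $\MS$ with the open locus of smooth fibers of the miniversal negatively graded deformation $\X^-\to\T^-$ of $\C_\H$ supplied by Pinkham's construction (recalled in Section \ref{secHauser}). The assertion of the corollary is precisely that this locus is non-empty, i.e. that $\C_\H$ admits a smoothing among its negatively graded deformations. Here the complete intersection hypothesis does the real work: since $\mathbf{T}^2_{\C_\H}=0$ the deformation is unobstructed and $\T^-$ is an affine space, and a general unfolding of the $r-1$ generators of the defining ideal $\mathbf{I}$ perturbs $\C_\H$ to a smooth affine curve.

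The step I expect to be the main obstacle is confirming that the general fiber is genuinely smooth while retaining the marked smooth point whose Weierstrass semigroup is $\H$. Concretely, one checks that the $\mathbb{G}_m$-equivariant unfolding stays flat and that, for a general choice of the normalized coefficients spanning $\mathbf{T}^{1,-}$, the singularity at the origin is resolved; projectivizing such a fiber then yields a smooth curve with a point of Weierstrass semigroup $\H$, which is exactly the desired conclusion.
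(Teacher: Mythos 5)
The paper offers no argument here beyond calling the statement an immediate consequence of the Main Theorem, and your proposal follows essentially the same route: read off non-emptiness of $\MS$ from the compactification $\overline{\MS}=\proj(\mathbf{T}^{1,-})$ together with Pinkham's identification of $\MS$ with the locus of smooth fibers of $\mathcal{X}^-\to\mathcal{T}^-$. Your added explanation of the one point the paper leaves implicit --- that unobstructedness plus a general negative unfolding of the $r-1$ defining equations actually smooths the singularity (where the characteristic hypothesis is needed) --- is correct and fills in the only substantive step.
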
 

\begin{cor}
If $\C_{\H}$ is a local complete intersection, then the associated affine
monomial curve can be negatively smoothed without any obstruction.
\end{cor}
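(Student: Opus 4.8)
The plan is to read the statement off the two ingredients already assembled for the Main Theorem: the vanishing of the obstruction module, and the negative-weight miniversal deformation whose generic fibre is smooth. Recall that a \emph{negative smoothing} of $\C_{\H}$ is a flat deformation $\mathcal{X}^{-}\to\mathcal{T}^{-}$ supported on the negatively graded directions of the natural $\mathbb{G}_m$-action $X_{a_i}\mapsto\lambda^{a_i}X_{a_i}$ on $\C_{\H}$, whose generic fibre is a smooth curve. The obstruction to prolonging an infinitesimal deformation of this kind to all orders lives in the negatively graded part $\mathbf{T}^{1,-}$'s companion $\mathbf{T}^{2,-}$ of the second cotangent cohomology module. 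Thus the assertion splits into two independent parts: that the smoothing exists, and that its base is unobstructed, i.e. $\mathbf{T}^{2,-}=0$.

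First I would settle unobstructedness. By the discussion preceding the Main Theorem, the hypothesis that $\C_{\H}$ is a local complete intersection forces $\C_{\H}$ to be a global complete intersection in $\mathbb{A}^r$, cut out by $r-1$ of the binomials \eqref{polymain}. For a complete intersection the higher cotangent cohomology modules vanish, $\mathbf{T}^{i}_{\C_{\H}}=0$ for every $i\geq 2$, and this is insensitive to the characteristic of $\k$ \cite{LS}. A fortiori $\mathbf{T}^{2,-}=0$, so the graded deformation functor in negative weights is formally smooth and its base is the affine space $\mathcal{T}^{-}=\mathbb{A}^{N}$ with $N=\dim_{\k}\mathbf{T}^{1,-}$.

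Next I would produce the smoothing itself, which is exactly the content of the Main Theorem: the family $\mathcal{X}^{-}\to\mathcal{T}^{-}$ obtained by unfolding the defining equations has, over a dense open of $\mathcal{T}^{-}$, fibres that are smooth integral curves carrying a marked point with Weierstrass semigroup $\H$, and $\MS$ is the corresponding open locus inside $\proj(\mathbf{T}^{1,-})$. Combining the two halves, $\C_{\H}$ sits as the central fibre of a flat family over the smooth base $\mathcal{T}^{-}$ whose general fibre is smooth, which is precisely the statement that $\C_{\H}$ can be negatively smoothed without obstruction.

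The only point that requires genuine care — and the sole place where the restriction on $\mathrm{char}(\k)$ enters — is the smoothness of the general fibre rather than its mere flatness. The vanishing of $\mathbf{T}^{2}$ guarantees only that the formal deformation is unobstructed; it does not by itself certify that a generic negative perturbation of the binomials \eqref{polymain} cuts out a nonsingular curve, and in positive characteristic a prime dividing one of the exponents $\alpha_{i,j},\beta_{i,j}$ can destroy this. The hard part is therefore the characteristic-$p$ smoothness, which is exactly what the explicit Hauser-type computation underlying the Main Theorem supplies; granting it, the corollary is immediate.
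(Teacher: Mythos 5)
Your derivation matches what the paper intends: the corollary is stated there as an immediate consequence of the Main Theorem with no separate proof, and your two ingredients --- vanishing of $\mathbf{T}^{2}$ for a complete intersection (from \cite{LS}, quoted in the paper's introduction) together with the fact that the unfolding construction of the Main Theorem makes $\mathcal{T}^{-}$ a full affine space $\mathbb{A}^{N}$ carrying smooth fibres over a dense open subset --- are exactly the ones the authors rely on. Your closing caveat, that in positive characteristic the smoothness of the general fibre is carried by the explicit Hauser-type computation (where the hypothesis on $\mathrm{char}(\k)$ not dividing the exponents enters) rather than by $\mathbf{T}^{2}=0$ alone, is correctly placed and consistent with the paper's own emphasis.
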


In general, it is very difficult to describe a compactification of $\MS$ and the curves that make up its boundary. The authors are aware of two main approaches to considering geometric features of a closure of $\MS$ and properties of curves on its boundary. In the following two subsections, we cite some results concerning these two approaches. There are many high-standing works that are not cited here, most of which are referenced in the works cited below.

\subsection{$\MS$ coming from versal deformation}

The general theory of versal deformations of singularities dates back to the 1960s and 1970s, with the remarkable works of Schlessinger \cite{SchPhD} and Artin \cite{Ar76}. The connection between the spaces $\MS$ and the miniversal deformation in negative degrees was made by Pinkham in his PhD thesis  \cite{Pi74}, using an affine monomial curve associated with the semigroup $\H$. We shall briefly describe this connection in Subsection \ref{secHauser} below, as it is one of the main techniques used in this paper.

Some works have explored the study of $\MS$ using versal deformations. As Pinkham's paper \cite{Pi74} shows, the miniversal deformation offers a way to construct a compactification of $\MS$. The resulting closure of $\MS$ is totally described just for a few families of
semigroups, as we note below.

In \cite{St93}, Stoehr presents a rather explicit way to construct a compactification of $\MS$ as a variant of Hauser's algorithm, when $\H$ is assumed to be a suitable symmetric semigroup. Stoehr's construction relies on the unfold of the defining equations of the canonically embedded projective monomial curve associated to $\C_{S}$, extending Petri's analysis of the canonical ideal and then explo\-ring appropriate syzygies coming from the defining equations of $\C_{\H}$. It is obtained a compactification of $\MS$ as a closed subset of a weighted projective space by allowing irreducible Gorenstein curves at the boundary. Later on, Contiero-Stoher \cite{CS} and Contiero-Fontes \cite{CF} extend Stoehr's construction to all symmetric semigroups, making it totally implementable as well. In Section \ref{secHauser} below, we briefly recall this construction. We also refer to \cite{Maz}, where the second author presents some algorithms to compute the defining equations of $\C_\H$ and their unfolding, the defining equations of $\overline{\MS}$, and the equivariant tangent space $\mathbf{T}^{1}_{\C_\H}$ of the versal deformation space, whenever $\H$ is symmetric.

Nakano \cite{Nak} computed $\MS$ using Pinkham's approach by computationally determining the base space of the miniversal deformation of the monomial curve $\C_\H$ in negative degrees, for $g\leq 5$. He shows that for $g\leq 5$ the base space is an irreducible rational variety, except in one case: the semigroup $<4,6,11,13>$ when it has the structure of a projective quasi-cone over $\proj^1 \times \proj^3$. In this case, the base space is also irreducible, but in negative degrees it contains two components, one smooth and the other containing a curve with a double point (see \cite[Remark 2.9]{CFSV}).

In a recent paper \cite{Stev22}, Stevens extends the results of Nakano \cite{Nak} and explicitly computes the defining equations of the moduli space $\MS$ for many cases of genus at most seven and determines the dimension for all semigroups of genus not greater than seven. Stevens uses Hauser's algorithm in most cases, but in one case, he uses the projection method developed by De Jong and Van Straten \cite{DJVS}.

\section{Reviewing Weierstrass points}\label{recap}

We recall that a numerical semigroup $\H$ is a subset of the nonnegative integers $\mathbb{N}$
containing $0$, closed under addition such that only a finite number of elements are missing from $\H$.  The genus of $\H$ is the number of its gaps, i.e. the number of
positive integers that are not in $\H$, $$g(\H):=\#(\mathbb{N}\setminus\H)=\#\{1=\l_1<\dots<\l_g\},$$ and we easily see that
the largest gap $\l_g$ is not bigger than $2g-1$. 

Given an irreducible smooth pointed curve $(\C,P)\in\mathcal{M}_{g,1}$ of genus $g$, its associated Weierstrass semigroup $\H_{P}$ is the subset of all 
non-negative integers $n$ such that 
\begin{equation}\label{nongap}
\mathrm{H}^{0}(\C,\LO_{\C}((n-1)P))\subsetneq\mathrm{H}^{0}(\C,\LO_{\C}(nP)),
\end{equation}
 i.e. $n\in\H_{P}$ if and only if there is a rational function on $\C$ whose pole divisor is exactly $nP$. The point $P\in\C$ is 
called a Weierstrass point if $\H_{P}$ is different from the ordinary semigroup $\{0, g+1,g+2,\dots\}$. The 
Riemann--Roch Theorem implies that the genus of the Weierstrass semigroup $\H_P$ is equal to the genus of the curve $\C$.  It is well known that only a finite number of Weierstrass
points exist on a curve. 

Since the $i$-th gap of $\H$ defines an upper semicontinuous function on $\mathcal{M}_{g,1}$, it follows that $\MS$ is a locally closed subset of $\mathcal{M}_{g,1}$. However, it is also well known that the moduli space $\MS$ can be empty, meaning that there are numerical semigroups that cannot be realized as Weierstrass semigroups of a smooth pointed curve. There is no purely arithmetical criterion for determining when a numerical semigroup is realizable, but one necessary numerical condition is given by Buchweitz in \cite{Bu76}.

On the other hand, one can see that any numerical semigroup can be realized as a Weierstrass semigroup of a monomial curve. Taking $\H := <a_1, \dots, a_r>$, a numerical semigroup, let $\k[\H] := \oplus_{n \in \H} \k t^n$ be the associated semigroup algebra. The affine monomial curve attached to $\H$ is
\begin{equation}\label{monomial}
\C_{\H} = \Spec \k[\H] \subset \mathbb{A}^r,
\end{equation}
which in parametric terms is just
\[\C_{\H} = \{(t^{a_1}, \dots, t^{a_r}) \in \mathbb{A}^r \mid t \in \mathbb{A}^1\}.\]
It is easy to produce the closure of $\C_{\H}$ in a weighted projective space $\mathbb{P}^r$ by adding just a smooth point $P$ at infinity, and so the Weierstrass semigroup at $P$ is $\H$. Here, a Weierstrass point on an integral curve $\C$ at a smooth point $P$ is defined in the same way that when $\C$ is smooth, i.e., for a smooth point $P$ on $\C$, a positive integer $n$ is a nongap if and only if equation \eqref{nongap} holds.

A criterion for determine if a numerical semigroup $\H$ is realizable was given by Pinkham in his PhD Thesis. Namely, a numerical semigroup $\H$ is realizable if and only if the affine monomial
curve $\C_{\H}$ admits a negative smoothing, c.f. \cite[pg. 108]{Pi74}. Dealing with this criterion is unfortunately far from easy.

\section{Gorenstein curves and subcanonical points}

Throughout this section $\C$ stands for a non-hyperelliptic Gorenstein curve with a smooth subcanonical point $P$, i.e. the associated Weierstrass semigroup $\H$ at $P$ is symmetric.
Recall that a numerical semigroup is symmetric if
the Frobenius number $\l_{g}$ of $\H$ is the biggest possible, $\l_{g}=2g-1$. Equivalently,  $$\l_{g-i}=2g-1-n_{i}\ \ (0\leq i\leq g-1),$$
where $0=n_0<n_1<n_2<\dots$ are the nongaps of the semigroup. Since it is assumed $\H$ to be non-hyperelliptic, we may impose that $\l_2=2$, equivalently,
$n_{g-1}=2g-2$. 

We also fix at once a system of generators, $\H := <a_1, \dots, a_r>$. We are interested in two suitable systems of generators: the minimal system, where $r$ is the embedding dimension of $\H$, and the canonical system of generators, i.e., $r = g - 1$. In the former case, $\H$ is generated by its first $g$ nongaps, $\H = <n_0, n_1, \dots, n_{g - 1}>$.

As a general and important comment, if a curve $\C$ is a local complete intersection, then $\C$ is also Gorenstein and non-hyperelliptic, because the dualizing sheaf of a local complete intersection always induces an embedding.

\subsection{On $P$-hermitian bases}\label{phermitian}

By virtue of the Max Noether Theorem for non-hyperelliptic Gorenstein curves (\cite{CS}), the maps
\begin{equation}\label{maxnoether}
\mathrm{Sym}^n\,\mathrm{H}^0(\C,\omega)\longrightarrow\mathrm{H}^{0}(\C,\omega^n)
\end{equation}
are surjective for all $n\geq 1$, where $\omega\cong \mathcal{O}_{\C}((2g-2)P)$ is the dualizing sheaf of $\C$.  Hence, each vector space $\mathrm{H}^0(\C,\omega^n)$ admits a so-called $P$-hermitian basis, i.e.
given $n\geq 1$, for each nongap $\H\leq n(2g-2)$ we can choose a meromorphic function on $\C$ of the form
$\bfx^{\balpha}_{s}:=x_{a_0}^{\alpha_0}\dots x_{a_r}^{\alpha_r}$ satisfying  $$\mathrm{ord}_{\infty, P}\bfx^{\balpha}_{s}=\sum\alpha_{i}a_{i}=s,$$
where each $x_{a_i}$ is a regular function on $\C\setminus\{P\}$ whose pole
order at $P$ is $\mathrm{ord}_{\infty, P}(x_{a_i})=a_i$. We also may declare
$a_0=0$, so one can assume that $x_{a_0}=1$. Hence, each
$\mathrm{H}^{0}(\C,\omega^n)$ admits a base formed by
meromorphic functions on $\C$ whose pole orders at $P$ are pairwise distinct.

In order to have a uniqueness between the chosen basis
elements $\mathbf{x}^{\balpha}_{s}$, one can take them
in a way that $\balpha:=(\alpha_0,\dots,\alpha_r)\in\mathbb{N}^{r+1}$ is a minimal
element  according to the lexicographical order 
\begin{equation}\label{lexi}
\left(\sum_{i=0}^{r} \alpha_i,\sum_{i=0}^{r} a_i\alpha_{i},-\alpha_{0},-\alpha_{r-1},\dots,-\alpha_{1}\right).
\end{equation}
Hence 
\begin{equation}\label{basewn}
\mathrm{H}^{0}(\C,\,\omega^n)=\mathrm{Spam}\bigcup_{s\leq n(2g-2)}\{\mathbf{x}_{s}^{\balpha}\,;\,\balpha \mbox{ is minimal}\}.
\end{equation}

For each $n\geq 1$, $\Delta_{n}$ stands for the vector subspace of $\k[X_{a_0},\dots,X_{a_r}]$ spanned by the lifting 
of the above monomial basis of $\mathrm{H}^{0}(\C,\,\omega^n)$, namely
\begin{equation}\label{basisn}
\Delta_{n}:=\mbox{Spam}\left(\bigcup_{s}\left\lbrace \bm{X}^{\balpha}_{s}\,;\,s\leq n(2g-2)\mbox{ and } \balpha \mbox{ minimal }\right\rbrace\right),
\end{equation} where $\mathbf{X}^{\balpha}_{s}:=X^{\alpha_0}_{a_0}\dots X_{a_r}^{\alpha_r}$, with $\sum_{i=1}^{r}a_i\alpha_i=s$.

We define $\deg(X_{a_i})=a_i$. It follows from the Riemann--Roch Theorem for singular curves that $\dim_{\k}\Delta_{n}=(2n-1)(g-1)$ and so
\begin{equation}\label{dimDeltan}
\dim\,\k[X_{a_0},\dots,X_{a_r}]_{\leq n}=\left(\begin{array}{c}
n+g-1\\
n
\end{array}\right)-(2n-1)(g-1),
\end{equation}
where
$\k[X_{a_0},\dots,X_{a_r}]_{\leq n}$ stands for vector space over $\k$ given by the isobaric polynomials of (weighted) degree not bigger than $n(2g-2)$. 

\begin{rmk}\label{rem1}
Considering the canonical system of generators for $\H = <n_0, n_1, \dots, n_{g - 1}>$, the above process produces a basis for $\Delta_n$ (respectively for $\mathrm{H}^0(\C, \omega^n)$) that is formed just by monomials on $X_{n_i}$ (respectively on $x_{n_i}$) all of the same degree $n$, which does not happen when we consider the minimal system of generators. For instance, the base elements of $\Delta_2$, respectively $\Delta_3$, are given by quadratic forms $X_{a_s} X_{b_s}$ with $\H \leq 4g - 4$, respectively by cubic forms $X_{u_\sigma} X_{v_\sigma} X_{w_\sigma}$ with $\sigma \leq 6g - 6$, according to the order fixed in \eqref{lexi}. We may also conclude that
$$\dim \,\k[X_{n_0}, \dots, X_{n_{g - 1}}]_n = \binom{n + g - 1}{n} - (2n - 1)(g - 1),$$
where $\k[X_{n_0}, \dots, X_{n_{g - 1}}]_n$ stands for the usual vector space given by the forms of degree $n$.
\end{rmk}

\subsection{The ideal of the canonically embedded $\C$}\label{idealcan}

We start by identifying $\C$ with its image under
the canonical embedding given by its dualizing sheaf $\omega$. In this way, $\C$ can be viewed
as a curve of genus $g$ and degree $2g-2$ in $\mathbb{P}^{g-1}$. Let $I(\mathcal{C})=\oplus_{j=2}^{\infty}I_j(\mathcal{C})$ be the homogeneous ideal of $\mathcal{C}.$ By Riemann's Theorem, for each $j\geq 2,$ the codimension of $I_j(\mathcal{C})$ in the vector space $\k[X_{n_0},\dots,X_{n_{g-1}}]_{j}$ is $(2j-1)(g-1)=dim_{\k}\Delta_j$. Then we obtain $$\k[X_{n_0},\dots,X_{n_{g-1}}]_{j}=\Delta_j\oplus I_j(\mathcal{C}), \quad \textrm{ for each } j\geq 2.$$
Recall, see \cite[Theorems 1.7 and 1.9]{O91}, that each nongap $\H\leq 4g-4$ can be written in $\nu_s$ different ways
as a sum of two non gaps not bigger than $2g-2$, namely
$$\H=a_{s1}+b_{s1}=\dots=a_{s\nu_s}+b_{s\nu_s},$$ where $a_{s_1} < \dots < a_{s_{\nu_s}}$ and $a_{s_i} \leq b_{s_i}, $ for all $i=1,\dots, \nu_s$.
Analogously,
there are $\nu_{\sigma}$ different ways to write each nongap $\sigma\leq 6g-6$ as a sum of three nongaps,
$$\sigma=u_{\sigma1}+v_{\sigma1}+w_{\sigma1}=\dots=u_{\sigma\nu_{\sigma}}+v_{\sigma\nu_{\sigma}}+w_{\sigma\nu_{\sigma}}.$$
Since $x_{a_{si}}x_{b_{si}}\in H^0(\C,\omega^2)$ and $x_{u_{\sigma j}}x_{v_{\sigma j}}x_{w_{\sigma j}}\in H^0(\C,\omega^3)$, we
may assume that $x_{a_{s1}}x_{b_{s1}}:=x_{a_s}x_{b_s}$ and  $x_{u_{\sigma 1}}x_{v_{\sigma 1}}x_{w_{\sigma 1}}:=x_{u_{\sigma}}x_{v_{\sigma}}x_{w_{\sigma}}$ are base elements of $\Delta_2$ and $\Delta_3$, respectively, c.f. Remark \ref{rem1}. Hence, for each $i=2,\dots,\nu_{s}$ and 
each $j=2,\dots,\nu_{\sigma}$ the elements $x_{a_{si}}x_{b_{si}}$ and
$x_{u_{\sigma j}}x_{v_{\sigma j}}x_{w_{\sigma j}}$ can be written
as a linear combination of base elements, preserving the pole order at $P$, namely
\begin{eqnarray}
x_{a_{si}}x_{b_{si}}=c_{sis}x_{a_{s}}x_{b_{s}}+\sum_{n < s}c_{sin}x_{a_{n}}x_{b_n} \\
x_{u_{\sigma j}}x_{v_{\sigma j}}x_{w_{\sigma j}}=d_{\sigma j\sigma}x_{u_{\sigma}}x_{v_{\sigma}}x_{w_{\sigma}}+\sum_{m<\sigma}d_{\sigma jm}x_{u_{m}}x_{v_{m}}x_{w_{m}}
\end{eqnarray}
where $n$ and $m$ run over the nongaps and $c_{sin}$, $d_{\sigma jm}\in\k$ are constants. We also may assume that $c_{sis}=d_{\sigma j\sigma}=1$, because they must be different from
zero and so we can multiply them by suitable constants.
By construction, the $\frac{1}{2}(g-2)(g-3)$ quadratic forms 
\begin{eqnarray}\label{formsquad}
F_{si}:=X_{a_{si}}X_{b_{si}}-X_{a_s}X_{b_s}-\sum_{n=0}^{s-1} c_{sin}X_{a_n}X_{b_n}\in\k[X_{n_0},\dots,X_{n_{g-1}}]
\end{eqnarray}
and the $\left(\begin{array}{c}
     g+2\\
     3
\end{array}\right)-(5g-5)$ cubic forms
\begin{eqnarray}\label{cubicforms}
G_{\sigma j}=X_{a_{\sigma j}}X_{b_{\sigma j}}X_{c_{\sigma j}}-X_{a_{\sigma}}X_{b_{\sigma }}X_{c_{\sigma }}-\sum_{n=0}^{\sigma-1} d_{\sigma jn}X_{a_{\sigma}}X_{b_{\sigma }}X_{c_{\sigma }}\in\k[X_{n_0},\dots,X_{n_{g-1}}],
\end{eqnarray}
vanish identically on the canonical curve $\C$, are linearly independent and because of their number,  form a basis of the vector spaces $I_2(\C)$ and $I_3(\C)$, respectively. 

Petri's Analysis remains true for canonical Gorenstein curves and assures that the ideal $I(\C)$ is generated by quadratic relations, provided $\C$ is non-hyperelliptic, nontrigonal and not isomorphic to a quintic plane curve. When $\C$ is trigonal or isomorphic to a quintic plane curve, it assures that $I(\C)$ is generated by quadratic and cubic forms. It turns out that if $\H$ is such that $3<n_1<g$ and $\H\neq <4,5>$, then $\C$ is non-trigonal and not isomorphic to a quintic plane curve. Hence the ideal $I(\C)$ is generated by the quadratic forms in equation \eqref{formsquad}, see \cite{St93} or Theorem \ref{TeoDoKarl} below. Therefore, if $\H$ is such that $n_1=3$, $n_1=g$ or $\H=<4,5>$, then the ideal $I(\C)$ is generated by the quadratic forms in \eqref{formsquad} and suitable cubic forms picked up from  \eqref{cubicforms}, c.f. \cite[Thm 3.7] {CF}.

\medskip

It is worth to note that each non-hyperelliptic numerical semigroup $\H$ can be rea\-li\-zed as the Weierstrass semigroup of a Gorenstein (canonical) curve. Namely, taking the canonical monomial curve
\begin{equation}\label{defc0}
\C^{(0)}:=\{(a^{n_0}b^{\ell_g-1}: a^{n_1}b^{\ell_{g-1}-1}: \cdots:a^{n_{g-1}}b^{\ell_1-1} \,|\, (a:b)\in\mathbb{P}^1)\}\subset \mathbb{P}^{g-1},
\end{equation}
\noindent the Weierstrass semigroup at $P=(0:\cdots:0:1)$ is equal to $\H$, c.f. \cite[p.190]{St93}. Moreover, the ideal of $\C^{(0)}$ is generated by the following $\frac{1}{2}(g-2)(g-3)$ \emph{folded quadratic forms} (see \cite[Lemma 2.3] {CS})
\begin{equation}\label{geradoras}
F^{(0)}_{si}=X_{a_{si}}X_{b_{si}}-X_{a_s}X_{b_s},
\end{equation}
provided that $3<n_1<g$ and $\H\neq <4,5>$. In addition, if $n_1=3$, $n_1=g$ or $\H=<4,5>$, then the ideal of $I(\C^{(0)})$ is generated by the above $\frac{1}{2}(g-2)(g-3)$ folded quadratic
forms and suitable \emph{folded cubic forms}
\begin{eqnarray}\label{geradoras3}
G_{\sigma j}^{(0)}=X_{a_{\sigma j}}X_{b_{\sigma j}}X_{c_{\sigma j}}-X_{a_{\sigma}}X_{b_{b_{\sigma }}}X_{c_{\sigma }},
\end{eqnarray}  c.f. \cite[Lemma 3.3]{CF}.

\subsection{Unfolding the defining equations}\label{secunfod}

Given the monomial curve $\C_{\H}\subset\mathbb{A}^r$ associated
to any non-ordinary semigroup $\H=<a_1,\dots,a_r>$, a result due to Herzog, \cite{Her70}, assures that the generators of the ideal $I(\C_{\H})$ can be chosen to be \textit{isobaric forms} which are given by the difference of two monomials in the variables $X_{a_1},\dots,X_{a_r}$, namely
\begin{equation}\label{eqherz}
  \mathbf{X}_{s}^{\boldsymbol{\alpha}}-\mathbf{X}_{s}^{\boldsymbol{\beta}},
\end{equation}
such that $\alpha_i\beta_i=0$ for $i=1,\dots,r$ and $\sum a_i\alpha_i=\sum a_i\beta_i=s$ is its weight.

When we assume non-hyperelliptic symmetric semigroups, we can consider two systems of generators for $\H$, namely the minimal and the canonical ones. By considering the minimal system of generators, $a_1,\dots,a_r$, we may take the basis of $\Delta_i\subset \k[X_{a_0},X_{a_1},\dots,X_{a_r}]$, for $i\geq 2$, which is given by the lifting of the P-hermitian
basis of  $\mathrm{H}^0(C,\omega^i)$, where $\omega$ is the dualizing sheaf of $\C_{\H}$, see Eq. \eqref{basisn} of Section \ref{phermitian}. If $H^{(0)}$ is 
a generating form of $I(\C_{\H})$, say
$H^{(0)}=\mathbf{X}_{s}^{\boldsymbol{\alpha}}-\mathbf{X}_{s}^{\boldsymbol{\beta}}\in I(\C_{\H})$ of weight $\H$, let $n$ be the smallest positive integer such that $\H\leq n(2g-2)$. Thus the \textit{unfold} of $H^{(0)}$ is the polynomial
\begin{equation}\label{defunfold}
   H_s=H^{(0)}_{s}+\sum_{j<s}c_{sj}\mathbf{X}^{\boldsymbol{\gamma}}_{j}\vert_{X_{a_0}=1}\in\k[\{c_{sj}\}]\otimes\k[X_{a_1},\dots X_{a_r}],
\end{equation}
where each $\mathbf{X}^{\boldsymbol{\gamma}}_{j}$ is the unique basis element of $\Delta_{n}$ of weight $j$,  
$\mathbf{X}^{\boldsymbol{\gamma}}_{j}\vert_{X_{a_0}=1}$ is the monomial obtained from $\mathbf{X}^{\boldsymbol{\gamma}}_{j}$ making $X_{a_0}=1$ and $c_{sj}$ are variables over the ground field $\k$. We attach weight $\H-j$ to each $c_{sj}$. Since the weight of $X_{a_0}$ is zero, the unfold of $H^{(0)}$ is also an isobaric form of degree $\H$.

By considering the canonical system of generators for $\H$, namely $n_0,n_1,\dots,n_{g-1}$, the canonical ideal of $\C^{(0)}$ is generated by isobaric forms that are also homogeneous polynomials (quadratic and cubic) in the usual sense, c.f. Eq. \eqref{geradoras} and Eq. \eqref{geradoras3}. Then the \textit{unfold} a defining quadratic form $F_{si}^{(0)}$ is
\begin{eqnarray}\label{unfoldc0}
F_{si}=F_{si}^{(0)}-\sum_{n=0}^{s-1} c_{sin}X_{a_n}X_{b_n}\in\k[\{c_{sij}\}]\otimes\k[X_{n_0},\dots,X_{n_{g-1}}],
\end{eqnarray} while the \textit{unfold} of a cubic defining form $G_{si}^{(0)}$ is
\begin{eqnarray}\label{unfodcubic}
G_{\sigma j}=G_{\sigma j}^{(0)}-\sum_{n=0}^{\sigma-1} d_{\sigma jn}X_{a_{\sigma}}X_{b_{\sigma }}X_{c_{\sigma }}\in\k[\{d_{\sigma ij}\}]\otimes\k[X_{n_0},\dots,X_{n_{g-1}}].
\end{eqnarray}
Note that the unfold of the quadratic and cubic defining equations of the canonical curve $\C^{(0)}$, are again quadratic and
cubic forms in  $\k[X_{n_0},\dots,X_{n_{g-1}}]$ and isobaric as well, provided
the weight of $c_{sin}$ and $d_{\sigma jn}$ are $\H-n$ and $\sigma-n$, respectively.

It is evident that the unfold of the defining equations of a monomial curve is a perturbation of its defining ideal. To get a deformation preserving at least the dimension and the arithmetical genus over the fibers, these perturbations cannot be chosen independently. Generally, they are related by \emph{syzygetic} relations. This is precisely the subject of the next subsection.

\subsection{A variant of Hauser's algorithm}\label{secHauser}

In his PhD thesis \cite{Pi74}, Pinkham constructs the moduli space $\MS$ using equivariant (versal) deformation theory. In short, Pinkham starts by considering the versal deformation space of the affine monomial curve $\C_{\H}$, say 
$$
\begin{matrix} 
\mathcal{X}_{t_0}\cong\C_{\H} & \longrightarrow & \mathcal{X} \\[3pt] 
\Big\downarrow & & \Big\downarrow\\[7pt]
\{t_0\}=\Spec\,\k & \longrightarrow & \mathcal{T}
\end{matrix}
$$
where $\mathcal{T}=\Spec A$ and $A$ is a local, complete noetherian $\k$-algebra, c.f. \cite{Ar76}.
The $\mathbb{G}_{m}$-action on $\C_{\H}$,
given by  $(\zeta, X_{a_i})\mapsto \zeta^{a_i}X_{a_i}$,  
can be extended to the total  and
parameter spaces, $\mathcal{X}$ and  $\mathcal{T}$, inducing a
grading on the  tangent  space $\mathbf{T}^1_{\C_{\H}}\cong \mathbf{T}^{1}(\k[\H])$ to
$\mathcal{T}$, that is the cotangent complex associated to $\C_{\H}$. 

We declare that a deformation has negative weight $-e$ if it decreases the weights of the defining equations of the curve and the corresponding deformation variable
has then (positive) weight $e$. It is more than convenient to note
that the unfolds of the defining forms of $\C_{\H}$ and $\C^{(0)}$ in Equations \eqref{defunfold}, \eqref{unfoldc0} and \eqref{unfodcubic} of the preceding subsection occur in negative degrees, once provided that they define a deformation of $\C_{\H}$ and $\C^{(0)}$, respectively.


Let $\mathbf{I}$ be the ideal of $A$ generated by the elements corresponding to the positive graded part $\mathbf{T}^{1,+}(\k[\H])$. 
The space $\mathcal{T}^-:=\Spec {A}/\mathbf{I}$  is the subspace of $\mathcal{T}$ in 
negative degrees and the restriction $\mathcal{X}^- \to \mathcal{T}^-$ is the 
versal deformation in negative degrees,
$$
\begin{matrix} 
\mathcal{X}_{t_0}\cong\C_{\H} & \longrightarrow & \mathcal{X}^{-} \\[3pt] 
\Big\downarrow & & \Big\downarrow\\[7pt]
\{t_0\}=\Spec\,\k & \longrightarrow & \mathcal{T}^{-}=\Spec(A/\mathbf{I})
\end{matrix}
$$

In addition, the total space $\mathcal{X}^- $ and the parameter space $ \mathcal{T}^-$  are both defined by polynomials. In general, the total and parameter spaces associated to an analytic singularity cannot be defined by polynomial equations alone, and sometimes do not have a finite dimension. However, this does not happen when deforming quasi-homogeneous singularities.

Next Pinkham produces a fiberwise compactification $\smash{\overline{\mathcal{X}}}^- \to \mathcal{T}^-$ of the versal deformation in negative degrees $\mathcal{X}^{-}\rightarrow \mathcal{T}^{-}$ without compactify the parameter and total space, avoiding technical problems coming from inverse limits. Doing this, Pinkham shows that each fiber of $\smash{\overline{\mathcal{X}}}^- \to \mathcal{T}^-$ is an integral curve
in a weighted projective space with one point $P$ at infinity whose associated Weierstrass semigroup is exactly $\H$.
All the fibers over a given
$\mathbb{G}_m$ orbit of $\mathcal{T}^-$ are isomorphic, and two fibers
are isomorphic if and only if they lie in the same orbit.
This is proved in \cite{Pi74} for smooth fibers and in general in the Appendix
of \cite{Lo84}.

Now let us invert the above considerations starting with a possible singular integral curve $\C$, of arithmetic genus $g>1$ 
defined over 
$\k$. 
Given a smooth point $P$ of $\C$, let  $\H$ be the Weierstrass semigroup of $\C$ at $P$. Consider the line  bundle $L=\LO_{\C}(P)$ and
form the ring of sections $\mathcal{R} = \oplus_{i=0}^\infty H^0(\C,L^i)$.
This leads to an embedding of  $\C= \mathbb{P} (\mathcal R)$ in a weighted projective space, with coordinates $X_{a_0},  \dots, X_{a_r}$ with $\deg (X_{a_0}) = 1$.  The space $\Spec  \mathcal R$
is the corresponding quasi-cone in affine space. Setting $X_{a_0} = 0$ defines the monomial 
curve $\C_{\H}$, all other fibers are isomorphic to $\C\setminus P$. In particular, if $\C$ is 
smooth, this construction defines a smoothing of $\C_{\H}$.
Then Pinkham establishes the following result:

\begin{thm}[\null{\cite[Thm. 13.9]{Pi74}}]\label{pinkhamthm}
Let $\mathcal{X}^- \to \mathcal{T}^-$ be the equivariant 
miniversal deformation in negative
degrees of the monomial curve $\C_{\H}$ for a given semigroup $\H$ and denote
by $\mathcal{U}^-$ the open subset of $\mathcal{T}^-$ given by the points
with smooth fibers. Then the moduli space $\MS$ is isomorphic to the quotient
$\MS=(\mathcal{U}^-)/\mathbb{G}_{m}$ of  $\mathcal{U}^-$ 
by the  $\mathbb{G}_{m}$-action.
\end{thm}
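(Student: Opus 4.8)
The plan is to exhibit $\MS$ and the quotient $(\mathcal{U}^-)/\mathbb{G}_m$ as isomorphic by constructing mutually inverse morphisms, using versality in one direction and the ring-of-sections construction in the other. First I would build the forward map $\Phi\colon\mathcal{U}^-\to\MS$. For each $t\in\mathcal{U}^-$ the fiber $\mathcal{X}_t$ of the miniversal deformation in negative degrees is smooth; passing to the fiberwise compactification $\overline{\mathcal{X}}^-\to\mathcal{T}^-$ produces a smooth complete pointed curve $(\overline{\mathcal{X}}_t,P)$ in a weighted projective space whose Weierstrass semigroup at the point $P$ at infinity is exactly $\H$, as already recorded above. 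Thus $t\mapsto[(\overline{\mathcal{X}}_t,P)]$ defines a set-theoretic map to $\MS$, and versality guarantees it is a morphism of schemes. Because all fibers over a single $\mathbb{G}_m$-orbit of $\mathcal{T}^-$ are isomorphic as pointed curves, $\Phi$ is $\mathbb{G}_m$-invariant and descends to $\overline{\Phi}\colon(\mathcal{U}^-)/\mathbb{G}_m\to\MS$.

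Next I would verify that $\overline{\Phi}$ is bijective on points. Injectivity is precisely the cited fact that two fibers are isomorphic if and only if they lie in the same orbit. For surjectivity I would run the reverse construction described above: given any smooth pointed curve $(\C,P)$ with Weierstrass semigroup $\H$, set $L=\LO_{\C}(P)$ and form the ring of sections $\mathcal{R}=\oplus_{i\geq 0}H^0(\C,L^i)$, embedding $\C$ in a weighted projective space with the coordinate $X_{a_0}$ of degree $1$. The one-parameter family obtained by varying $X_{a_0}$ is a smoothing of $\C_{\H}$ (the special fiber $X_{a_0}=0$), hence a deformation of the monomial curve. By versality this deformation is pulled back from $\mathcal{X}\to\mathcal{T}$ along a classifying map; the $\mathbb{G}_m$-equivariance forces its image into the negative-degree part $\mathcal{T}^-$, and smoothness of the generic fiber places it in $\mathcal{U}^-$. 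This yields a preimage of $[(\C,P)]$, so $\overline{\Phi}$ is surjective.

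Finally I would upgrade the bijection to an isomorphism of varieties by matching infinitesimal structures: the tangent space to $\MS$ at $(\C,P)$ is identified with the negatively graded first-order equivariant deformations of $\C_{\H}$, which is exactly the tangent space to $\mathcal{T}^-$ modulo the one-dimensional $\mathbb{G}_m$-direction, so $\overline{\Phi}$ is \'etale, and being bijective it is an isomorphism. I expect the main obstacle to be this last step rather than the point-set bijection: one must show that the classifying map of the reverse construction is itself algebraic and unique up to the group action, and control the $\mathbb{G}_m$-quotient on $\mathcal{U}^-$ so that $(\mathcal{U}^-)/\mathbb{G}_m$ is a genuine geometric quotient with the expected smooth structure. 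This is where Pinkham's original analytic argument for smooth fibers, completed in general by Looijenga's appendix, does the real work, and where the grading induced by the $\mathbb{G}_m$-weights of the deformation variables is indispensable.
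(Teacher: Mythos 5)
Your outline follows essentially the same route the paper takes: the paper does not prove this theorem but cites it to Pinkham (with the general case of singular fibers handled in Looijenga's appendix), and the surrounding discussion in Section~\ref{secHauser} consists of exactly the two constructions you describe — the fiberwise compactification $\smash{\overline{\mathcal{X}}}^-\to\mathcal{T}^-$ giving the forward map, and the ring-of-sections embedding $\C=\proj(\mathcal{R})$ giving the inverse. Your sketch is consistent with that, and you correctly locate the genuinely hard content (algebraicity of the classifying map, the geometric quotient, and the case of non-smooth special fibers) in the cited sources rather than claiming to reprove it.
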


The remainder of this subsection is devoted to explicitly describing Pinkham's Theorem \ref{pinkhamthm} in the case where $\H$ is assumed to be a non-hyperelliptic symmetric semigroup. We present the construction initiated by Stoehr \cite{St93}, and subsequently developed by Contiero-Stoehr \cite{CS} and Contiero-Fontes \cite{CF}. This construction can be viewed as a variant of Hauser's algorithm for computing the versal deformation space of a singularity; see \cite{Hau83,Hau85} and \cite{Stev13}.

We start by fixing a numerical symmetric semigroup $\H=<n_0,n_1,\dots,n_{g-1}>$ of genus $g>3$ satisfying $$3<n_1<g \ \ \text{and} \ \ \H\neq<4,5>.$$ These restrictions are also imposed to avoid simple and well-known cases. If $n_1=3$ or $\H=<3,4>$, then $\C_{\H}$ is a plane curve. Additionally, if $n_1=g$, then $\H$ is not a complete intersection, see \cite{BMG}, and the
associated moduli it is studied in \cite{CF}.

So, if $\C$ is a Gorenstein curve with a smooth point whose associated Weierstrass semigroup is $\H$, then $\C$ can be identified with this image under the canonical
embedding in such a way that the Weierstrass point $P$ that
realizes $\H$ is the point $P=(0:\ldots:0:1)$. Hence by section \ref{idealcan}, the canonical ideal $I(\C)\subset\k[X_{n_0},\dots,X_{n_{g-1}}]$ is generated by the
$\frac{1}{2}(g-2)(g-3)$ quadratic forms
$$
F_{si}=X_{a_{si}}X_{b_{si}}-X_{a_s}X_{b_s}-\sum_{n=0}^{s-1} c_{sin}X_{a_n}X_{b_n}\in\k[X_{n_0},\dots,X_{n_{g-1}}],$$
where $c_{sij}$ are suitable constants in $\k$, the forms
$F_{si}^{(0)}=X_{a_{si}}X_{b_{si}}-X_{a_s}X_{b_s}$ generate the ideal of the canonical monomial curve $\C^{(0)}\subset\mathbb{P}^{g-1}$ defined in \eqref{defc0}, and each $X_{a_j}X_{b_j}$ belongs to the fixed base $\Delta_2$ in equation \eqref{basisn}.

Now let us invert the considerations on the previous paragraph. Let 
$$F_{si}^{(0)}=X_{a_{si}}X_{b_{si}}-X_{a_s}X_{b_s}$$ be the defining polynomials of the canonical curve $\C^{(0)}$ as in \eqref{geradoras}. Now let us take their unfolding
$$
F_{si}=F_{si}^{(0)}-\sum_{n=0}^{s-1} c_{sin}X_{a_n}X_{b_n}\in\k[\{c_{sij}\}]\otimes\k[X_{n_0},\dots,X_{n_{g-1}}],
$$defined in section \ref{secunfod} equation \eqref{unfoldc0}. We want to determine the constants $c_{sin}$ in order that the intersection of the $V(F_{si})$ in $\mathbb{P}^{g-1}$ is a canonical Gorenstein curve of genus $g$ whose Weierstrass semigroup at the smooth point $P$ is $\H$. 

Since the coordinates functions $x_n$ introduced in section \ref{phermitian}, where $n\in\H$ 
and $n\leq 2g-2$, are not uniquely determined by their pole divisor $nP$,
we may transform
\[
X_{n_i}\longmapsto X_{n_i}+\sum_{j=0}^{i-1}\alpha_{ij}X_{n_{i-j}},
\]
for each $i=1,\ldots, g-1$, and so we can normalize 
$\frac{1}{2}g(g-1)$ of the coefficients $c_{sin}$ to be zero, see 
\cite[Proposition 3.1]{St93}. Due to these normalizations and the 
normalizations of the coefficients $c_{sin}=1$ with $n=s$, the only freedom left to us 
is to transform $x_{n_i}\mapsto \alpha^{n_i}x_{n_i}$ for $i=1,\ldots, g-1$.

The first step to the explicit construction of a compactification of $\MS$ due to Contiero--Stoehr is the following lemma:

\begin{lem}[Syzygy Lemma \cite{CS}] \label{syzygylem}
For each of the $\frac{1}{2}(g-2)(g-5)$ quadratic binomials $F_{s'i'}^{(0)}$ different from $F_{n_i+2g-2,1}^{(0)},\, i=0,\dots,g-3$, there is a syzygy of the form  
\begin{equation}\label{sygies}
X_{2g-2}F^{(0)}_{s'i'}+\sum_{nsi}\epsilon_{nsi}^{(s'i')}X_nF^{(0)}_{si}=0
\end{equation}where the coefficients $\epsilon^{s'i'}_{nsi}$ are integers equal to $1,-1$ or $0$ and where the sum is taken over the nongaps $n\leq 2g-2$ and the double indices $si$ with  $n+s=2g-2+s.$
\end{lem}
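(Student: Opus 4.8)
The plan is to reduce the statement to an identity among cubic monomials and to prove it by straightening. Fix a non-excluded generator $F^{(0)}_{s'i'}=X_{a_{s'i'}}X_{b_{s'i'}}-X_{a_{s'}}X_{b_{s'}}$ of weight $s'\leq 4g-4$. Multiplying by the top variable $X_{2g-2}$ (recall $n_{g-1}=2g-2$) produces the difference of two cubic monomials $M:=X_{2g-2}X_{a_{s'i'}}X_{b_{s'i'}}$ and $M':=X_{2g-2}X_{a_{s'}}X_{b_{s'}}$, both of the same weight $\sigma:=2g-2+s'\leq 6g-6$. Since $\sigma\leq 6g-6$, the space $\Delta_3$ contains a \emph{unique} standard cubic base element $X_{u_\sigma}X_{v_\sigma}X_{w_\sigma}$ of weight $\sigma$, by Remark \ref{rem1} together with the order \eqref{lexi}. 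The strategy is to straighten each of $M$ and $M'$ down to this common standard cubic form using the quadratic generators $F^{(0)}_{si}$, and to read off the syzygy from the difference of the two straightening chains.

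For the straightening step, observe that any non-standard quadratic submonomial $X_aX_b$ of weight $s=a+b\leq 4g-4$ is congruent to its standard representative $X_{a_s}X_{b_s}$ modulo the single generator $F^{(0)}_{si}$ with $a_{si}=a,\ b_{si}=b$; multiplying by the remaining linear factor $X_n$ gives the exact identity $X_nX_aX_b-X_nX_{a_s}X_{b_s}=X_nF^{(0)}_{si}$ with $n+s=\sigma$. Applying this repeatedly to $M$ and to $M'$, each time replacing the current non-standard quadratic submonomial by its standard one, drives both cubic monomials to the unique standard cubic $X_{u_\sigma}X_{v_\sigma}X_{w_\sigma}$ guaranteed above. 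Subtracting the two chains, the common endpoint cancels and one is left with
\[
X_{2g-2}F^{(0)}_{s'i'}=M-M'=\sum_{nsi}\epsilon^{(s'i')}_{nsi}X_nF^{(0)}_{si},
\]
which, transposed to the other side, is exactly the asserted syzygy \eqref{sygies}. Since every generator is a pure binomial with coefficients $\pm1$ and each straightening step introduces a single multiple $X_nF^{(0)}_{si}$, all $\epsilon^{(s'i')}_{nsi}$ are integers, and the weight bookkeeping forces $n+s=\sigma=2g-2+s'$ together with $n\leq 2g-2$, matching the stated index constraints.

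The hard part will be controlling the coefficients so that each $\epsilon^{(s'i')}_{nsi}$ lies in $\{-1,0,1\}$, i.e.\ showing that the difference of the two straightening chains is multiplicity-free. To guarantee this I would organize both reductions by a fixed rule, always straightening with respect to the order \eqref{lexi} and reducing the lexicographically largest non-standard submonomial first, so that the straightening of a cubic monomial becomes a well-defined, terminating, and confluent normal-form process whose recorded relations form a repetition-free chain; the uniqueness of the standard cubic representative of each weight (Remark \ref{rem1}) is exactly what forces the two chains to meet at the same endpoint. With this normalization each triple $(n,s,i)$ is used at most once in each chain, so after cancellation the coefficients can only be $0$ or $\pm1$. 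Establishing confluence and this no-accumulation property rigorously is the technical heart of the argument; it is in essence a straightening (Gröbner) statement for the binomial ideal $I(\C^{(0)})$, and could alternatively be phrased via Schreyer's syzygy theorem, taking the $F^{(0)}_{si}$ as a Gröbner basis for the weight order and realizing \eqref{sygies} as the reduction to zero of an $S$-polynomial whose leading multiplier is $X_{2g-2}$.

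Finally, the excluded generators $F^{(0)}_{n_i+2g-2,1}$, $i=0,\dots,g-3$, are precisely the $g-2$ forms that already involve the top variable $X_{2g-2}$; for these the product with $X_{2g-2}$ degenerates (producing a monomial of type $X_{2g-2}^2X_{n_i}$ whose straightening feeds back on the very relation being resolved), so no \emph{independent} syzygy of the stated shape with leading term $X_{2g-2}F^{(0)}$ is obtained and they must be set aside. That exactly $g-2$ generators are removed is consistent with the count $\tfrac12(g-2)(g-3)-\tfrac12(g-2)(g-5)=g-2$, which serves as a useful sanity check on the construction.
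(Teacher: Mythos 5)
Your overall strategy---multiply $F^{(0)}_{s'i'}$ by $X_{2g-2}$, straighten each of the two resulting cubic monomials of weight $\sigma=s'+2g-2$ down to the standard basis monomial $X_{u_\sigma}X_{v_\sigma}X_{w_\sigma}$ of $\Delta_3$, and subtract the two telescoping chains---is the same skeleton as the argument in \cite{CS}, which is where the present paper takes the lemma from (no proof is reproduced here). But your write-up stops exactly where the lemma has content, and you say so yourself. First, the claim that the straightening terminates at the standard cubic: the uniqueness of the standard representative of each weight (Remark \ref{rem1}) only says that $\Delta_3$ contains one basis monomial per weight; it does not say that a cubic monomial all of whose quadratic sub-pairs are already standard must equal that basis monomial. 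Without this, the two chains need not meet at a common endpoint and the construction collapses. Second, and more seriously, the bound $\epsilon^{(s'i')}_{nsi}\in\{-1,0,1\}$: a generic straightening or Schreyer-type argument only yields integer coefficients, since nothing prevents the same relation $X_nF^{(0)}_{si}$ from being used twice within a single chain, which after subtraction gives a coefficient $\pm 2$. You flag this as ``the technical heart'' and propose confluence of a fixed reduction order as the remedy, but confluence is precisely the Gr\"obner-basis statement for $I(\C^{(0)})$ that would itself have to be proved, and even confluence does not by itself exclude repeated use of one relation along one chain.

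In \cite{CS} both difficulties are dispatched simultaneously by making the reduction explicit and of controlled length: using the symmetry of $\H$, every cubic monomial of weight $\sigma\le 6g-6$ is driven to the standard one through prescribed intermediate monomials, so the two chains visibly reach the same endpoint and each relation $X_nF^{(0)}_{si}$ occurs at most once, whence the coefficients lie in $\{-1,0,1\}$. Your identification of the $g-2$ excluded binomials as ``those already involving $X_{2g-2}$'' is likewise asserted rather than derived from the normalization \eqref{lexi}; the count $\tfrac{1}{2}(g-2)(g-3)-\tfrac{1}{2}(g-2)(g-5)=g-2$ is a consistency check, not an identification of which forms must be set aside. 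In short: correct approach and correct reduction framework, but the proof is missing precisely at the two points that make the lemma nontrivial.
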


The algorithmic construction of the closure of $\MS$ starts by replacing 
the initial binomials $F_{s'i'}^{(0)}$ and $F_{si}^{(0)}$ in equation \eqref{sygies} 
by the corresponding 
 unfolded 
forms $F_{s'i'}$  and
$F_{si}$ displayed in equation \eqref{unfoldc0} of section \ref{secunfod}, obtaining a 
linear combination of cubic monomials of weight smaller than $\H'+2g-2$. By virtue of \cite[Lemma 2.4]{CS}  and its proof,
this linear combination of cubic monomials admits the following decomposition:
\begin{equation*}
 X_{2g-2}F_{s'i'}+\sum_{nsi}\varepsilon_{nsi}^{(s'i')}X_{n}F_{si}=
 \sum_{nsi}\eta_{nsi}^{(s'i')}X_{n}F_{si}+R_{s'i'},
\end{equation*}
where the sum on the right hand side is taken over the nongaps $n\leq 2g-2$ and
the double indexes $si$ with $n+s<s'+2g-2$, the coefficients
$\eta_{nsi}^{(s'i')}$ are constants, and where $R_{s'i'}$ is a linear
combination of cubic monomials of pairwise different weights smaller than $\H'+2g-2$.

For each nongap $m<s'+2g-2$, let $\varrho_{s'i'm}$ be the unique coefficient
of $R_{s'i'}$ of weight $m$. It is a quasi-homogeneous polynomial expression
of weight $\H'+ 2g - 2 - m$ in the coefficients $c_{sin}$.

All the objects that are required to construct the compactification of $\MS$ were introduced above. The main results due to Stoher and Contiero-Stoehr are the following.

\begin{thm}[c.f. Thm 2.6 of \cite{CS}]\label{TeoDoKarl}
Let $\H\subset\mathbb{N}$ be a numerical symmetric semigroup of genus $g$ satisfying $3<n_1<g$ and $\H\neq <4,5>$. Then the $\frac{1}{2}(g-2)(g-3)$ quadratic forms $F_{si}=F_{si}^{(0)}-\sum_{n=0}^{s-1}c_{sin}X_{a_n}X_{b_n}$ cut out a canonical integral Gorenstein curve in $\mathbb{P}^{g-1}$ if and only if the coefficients $c_{sin}$ satisfy the quasi-homogeneous equations $\varrho_{s'i'm}=0.$ In this case, the point $P=(0:0:\cdots:1)$ is a smooth point of the canonical curve with Weierstrass semigroup $\H$.

\end{thm}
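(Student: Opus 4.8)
The plan is to read the unfolded quadrics $F_{si}$ as a normal-form reduction system whose irreducible words are exactly the $\Delta$-monomials, and to recognize the condition $\varrho_{s'i'm}=0$ as the \emph{confluence} of this system; confluence will then force the coordinate ring $B:=\k[X_{n_0},\dots,X_{n_{g-1}}]/(F_{si})$ to carry the Hilbert function of the canonical monomial curve $\C^{(0)}$, which identifies $\mathrm{Proj}(B)$ as a canonical curve. I would first dispose of the forward implication, which is essentially formal. Suppose the $F_{si}$ cut out a canonical Gorenstein curve $\C\subset\proj^{g-1}$ with semigroup $\H$ at $P$, so each $F_{si}\in I(\C)$. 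Substituting the unfolds into the syzygy of Lemma \ref{syzygylem} and using the decomposition recorded just before the statement, both sums $\sum_{nsi}\varepsilon^{(s'i')}_{nsi}X_nF_{si}$ and $\sum_{nsi}\eta^{(s'i')}_{nsi}X_nF_{si}$ lie in $I(\C)$, whence $R_{s'i'}\in I(\C)$. But by construction $R_{s'i'}$ is a $\k$-linear combination of basis cubic monomials of pairwise distinct weights, so $R_{s'i'}\in\Delta_3$. Since $\C$ is canonical we have the direct sum $\k[X_{n_0},\dots,X_{n_{g-1}}]_3=\Delta_3\oplus I_3(\C)$ from Subsection \ref{idealcan}, and therefore $R_{s'i'}=0$; reading off coefficients gives $\varrho_{s'i'm}=0$ for all admissible $m$.

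For the converse I would assume $\varrho_{s'i'm}=0$, so $R_{s'i'}=0$ and the relations of Lemma \ref{syzygylem} lift to genuine syzygies $X_{2g-2}F_{s'i'}+\sum_{nsi}\varepsilon^{(s'i')}_{nsi}X_nF_{si}=\sum_{nsi}\eta^{(s'i')}_{nsi}X_nF_{si}$ among the unfolded forms. I regard each $F_{si}$ as a rewriting rule replacing its leading monomial $X_{a_{si}}X_{b_{si}}$ (the unique non-basis quadratic monomial of weight $s$) by the strictly lower reduction $X_{a_s}X_{b_s}+\sum_{n<s}c_{sin}X_{a_n}X_{b_n}\in\Delta_2$. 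The $\Delta$-monomials are precisely the irreducible words. Because $I(\C^{(0)})$ is generated by the folded quadrics of \eqref{geradoras} under the standing hypotheses $3<n_1<g$ and $\H\neq<4,5>$ (Petri's analysis, Subsection \ref{idealcan}), every overlap ambiguity of this quadratic system sits in degree three and is enumerated exactly by the syzygies of Lemma \ref{syzygylem}; the vanishing $R_{s'i'}=0$ says each of these ambiguities is resolvable.

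By Bergman's diamond lemma, resolvability of the degree-three ambiguities propagates to global confluence, so the $\Delta$-monomials form a $\k$-basis of $B$ in every degree and $\dim_{\k}B_n=(2n-1)(g-1)=\dim_{\k}\Delta_n$ for all $n\geq 1$; this is the Hilbert function of $\C^{(0)}$. Hence the family $V(F_{si})$ over the affine space of the parameters $c_{sin}$ is flat with special fibre $\C^{(0)}$, and $\C:=\mathrm{Proj}(B)$ is a curve of arithmetic genus $g$ and degree $2g-2$ in $\proj^{g-1}$ whose hyperplane class is subcanonical. Since then $\omega_{\C}\cong\O_{\C}(1)$ is invertible, $\C$ is Gorenstein and $\proj^{g-1}$ is its canonical embedding. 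Integrality I would extract from the Hilbert function: a general hyperplane section is a scheme of $2g-2$ points in linearly general position, which forbids $\C$ from being reducible or non-reduced (it degenerates flatly to the integral curve $\C^{(0)}$ through a smooth point, below).

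Finally, to read off the geometry at infinity, in the chart $X_{n_{g-1}}\neq 0$ the functions $x_{n_i}:=X_{n_i}/X_{n_{g-1}}$ vanish at $P=(0:\cdots:0:1)$ and are regular away from $P$; every $F_{si}$ vanishes at $P$ (the basis monomial $X_{n_{g-1}}^2$ of weight $4g-4$ never occurs, as $4g-4$ has a unique presentation), and comparison of lowest-weight terms with $\C^{(0)}$ shows $P$ is a smooth point. By the weight grading each $x_{n_i}$ has a pole of order exactly $n_i$ at $P$, so the Weierstrass semigroup at $P$ is generated by $n_0,\dots,n_{g-1}$, that is, it equals $\H$. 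The hard part is the confluence step of the second paragraph: one must verify that Lemma \ref{syzygylem} genuinely records \emph{all} overlap ambiguities of the quadratic rewriting system, so that the single family of conditions $\varrho_{s'i'm}=0$ upgrades degree-three consistency to the full Hilbert-function identity via the diamond lemma. The second delicate point is pinning down integrality of $\C$, rather than merely flatness of the degeneration to $\C^{(0)}$.
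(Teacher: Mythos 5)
Your forward implication is correct and is the standard one: substituting the unfolded forms into a syzygy of Lemma~\ref{syzygylem} and using the decomposition preceding the theorem shows $R_{s'i'}\in I_3(\C)$, while $R_{s'i'}\in\Delta_3$ and $\k[X_{n_0},\dots,X_{n_{g-1}}]_3=\Delta_3\oplus I_3(\C)$ force $R_{s'i'}=0$. (Note that the present paper does not reprove this theorem; it imports it from \cite{CS}, so the comparison below is with the argument there and in \cite{St93}.)

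The converse, however, has a genuine gap precisely at the step you flag. To invoke Bergman's diamond lemma you must resolve \emph{every} overlap ambiguity of the rewriting system whose rules are the $\frac{1}{2}(g-2)(g-3)$ quadrics $F_{si}$: the critical pairs are indexed by pairs of leading monomials $X_{a_{si}}X_{b_{si}}$, $X_{a_{s'i'}}X_{b_{s'i'}}$ sharing a variable, and there are far more of these than the $\frac{1}{2}(g-2)(g-5)$ relations of Lemma~\ref{syzygylem}, which supplies only \emph{one} syzygy per quadric outside the distinguished family $F^{(0)}_{n_i+2g-2,1}$. Moreover those relations are long linear syzygies of the monomial ideal, not S-polynomial reductions of two rules, and nothing in the lemma asserts that they generate the degree-three syzygy module of $I(\C^{(0)})$. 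So the implication ``$R_{s'i'}=0$ for these particular syzygies $\Rightarrow$ global confluence'' is exactly the content of the theorem and cannot be obtained by citing the diamond lemma; as written the argument is circular. The proof in \cite{CS} replaces this by a direct dimension count in degree three: using the combinatorics of presentations of nongaps from \cite{O91} one shows that $\Delta_3$ together with the products $X_nF_{si}$ spans $\k[X_{n_0},\dots,X_{n_{g-1}}]_3$ and that the only elements of $\operatorname{span}(X_nF_{si})\cap\Delta_3$ that can occur are the $R_{s'i'}$; their vanishing then gives $\k[X]_3=\Delta_3\oplus I_3$, and the identity propagates to all degrees via $\Delta_1\cdot\Delta_n=\Delta_{n+1}$ (Max Noether for the semigroup), yielding the Hilbert function without any confluence statement. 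Your integrality argument also fails as stated: the general position theorem for hyperplane sections is a property of \emph{integral} curves, not a consequence of the Hilbert function, so it cannot be used to rule out reducible or non-reduced $\C$. The degeneration idea in your parenthesis can be salvaged (geometric integrality is open in a proper flat family, and the $\mathbb{G}_m$-orbit closure degenerates $\C$ to the integral curve $\C^{(0)}$), but flatness of that one-parameter family is itself the Hilbert-function identity whose proof is the missing step; the source instead deduces integrality from the smooth point $P$, whose branch carries functions with pole orders generating $\H$, forcing the component through $P$ to have arithmetic genus $g$ and hence to be all of $\C$. Your observations that every $F_{si}$ vanishes at $P$ and that the weights of the $x_{n_i}$ determine the semigroup at $P$ are fine.
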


\begin{thm}[c.f. Thm. 2.7 of \cite{CS}]
Let $\H\subset\mathbb{N}$ a symmetric semigroup of genus $g:=\#(\mathbb{N}\setminus S)$ satisfying $3<n_1<g$ and $\H\neq <4,5>$. The isomorphism classes of the pointed complete integral Gorenstein curves with Weierstrass semigroup $\H$ correspond bijectively to the orbits of the $\mathbb{G}_m(\k)$-action
$$(c,\cdots,c_{sin},\cdots)\mapsto (\cdots,c^{s-n}c_{sin},\cdots)$$ on the affine quasi-cone of the vectors whose coordinates are the coefficients $c_{sin}$ of the normalized quadratic forms $F_{si}$ that satisfy the quasi-homogeneous equations $\sigma_{s'i'm}=0.$
\end{thm}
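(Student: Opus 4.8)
The plan is to combine Theorem \ref{TeoDoKarl} with an analysis of the residual symmetry left over after the normalizations carried out in Subsection \ref{secHauser}. Theorem \ref{TeoDoKarl} already provides a surjection from the affine quasi-cone $\{(\ldots,c_{sin},\ldots)\,:\,\varrho_{s'i'm}=0\}$ onto the set of canonically embedded complete integral Gorenstein curves of genus $g$ with Weierstrass semigroup $\H$ at the marked point $P=(0:\cdots:0:1)$: each admissible coefficient vector produces such a curve, and conversely every such curve, embedded by its dualizing sheaf, has its canonical ideal generated by normalized quadratic forms $F_{si}$ whose coefficients lie on the quasi-cone. It therefore remains only to identify precisely when two admissible coefficient vectors give rise to isomorphic pointed curves, and to show that this relation is exactly membership in a common $\mathbb{G}_m(\k)$-orbit.

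First I would check that orbit membership is sufficient for isomorphism. A scalar $\alpha\in\mathbb{G}_m(\k)$ acts on the coordinate functions by $x_{n_i}\mapsto\alpha^{n_i}x_{n_i}$, equivalently $X_{n_i}\mapsto\alpha^{n_i}X_{n_i}$. Since each monomial $X_{a_n}X_{b_n}$ occurring in $F_{si}$ has weight $n=a_n+b_n$, this substitution sends $F_{si}$ to $\alpha^{s}\bigl(X_{a_{si}}X_{b_{si}}-X_{a_s}X_{b_s}-\sum_n\alpha^{\,n-s}c_{sin}X_{a_n}X_{b_n}\bigr)$; dividing by the nonzero factor $\alpha^{s}$ shows that the transformed curve is cut out by the normalized forms with coefficients $\alpha^{\,n-s}c_{sin}$, which is the image of $(\ldots,c_{sin},\ldots)$ under the stated $\mathbb{G}_m(\k)$-action with parameter $\alpha^{-1}$. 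As the substitution is a linear automorphism of $\proj^{g-1}$ fixing $P$, the two curves are projectively equivalent, hence isomorphic as pointed curves.

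The main work, and the step I expect to be the chief obstacle, is the converse: if two admissible vectors yield isomorphic pointed curves, then they lie in a common orbit. Here I would argue that any isomorphism $(\C,P)\to(\C',P')$ extends to an isomorphism of the canonical models in $\proj^{g-1}$, because for these non-hyperelliptic Gorenstein curves the dualizing sheaf gives a canonical embedding unique up to $\mathrm{PGL}_g(\k)$. The requirements that the projective transformation carry $P$ to $P'$ (both equal to $(0:\cdots:0:1)$) and preserve the pole-order filtration at the marked point force it to be \emph{triangular} with respect to the ordering of the $x_{n_i}$ by pole order, that is, of the form $X_{n_i}\mapsto\lambda_i X_{n_i}+(\text{lower pole-order terms})$. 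The crucial rigidity statement is then that the normalizations already performed --- the vanishing of $\tfrac12 g(g-1)$ of the coefficients via the substitutions $X_{n_i}\mapsto X_{n_i}+\sum_{j=0}^{i-1}\alpha_{ij}X_{n_{i-j}}$, together with the normalizations $c_{sis}=1$ --- pin down every off-diagonal entry and all but one degree of freedom among the $\lambda_i$, leaving exactly the one-parameter diagonal torus $\lambda_i=\alpha^{n_i}$. This is precisely the assertion, recorded in Subsection \ref{secHauser} following \cite[Proposition 3.1]{St93}, that the only remaining freedom is $x_{n_i}\mapsto\alpha^{n_i}x_{n_i}$. Granting it, any isomorphism between two normalized curves is realized by such a torus element, so the two coefficient vectors differ by the $\mathbb{G}_m(\k)$-action.

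Finally I would assemble these facts. By the sufficiency direction the surjection of Theorem \ref{TeoDoKarl} descends to a well-defined map from $\mathbb{G}_m(\k)$-orbits on the quasi-cone to isomorphism classes of pointed curves; this descended map is surjective because every such pointed curve produces an admissible coefficient vector after canonical embedding and normalization, and it is injective by the converse direction. This yields the desired bijective correspondence.
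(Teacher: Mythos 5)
The paper does not prove this theorem itself --- it is quoted from \cite[Thm.~2.7]{CS} --- but your argument is a correct reconstruction of the standard proof and uses exactly the ingredients the paper assembles: the surjectivity supplied by Theorem \ref{TeoDoKarl}, the weight computation showing $X_{n_i}\mapsto\alpha^{n_i}X_{n_i}$ acts on coefficients by $c_{sin}\mapsto\alpha^{n-s}c_{sin}$, and the rigidity statement (recorded in Subsection \ref{secHauser} after \cite[Prop.~3.1]{St93}) that the normalizations leave only the one-parameter diagonal torus. The only step you grant rather than prove is that rigidity claim, which is precisely what the paper itself takes from \cite{St93}, so your proposal matches the intended argument.
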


\section{The Main Theorem}

Let $\H$ be a non-hyperelliptic symmetric semigroup and $0=n_0< n_1<\dots< n_{g-1}$ its canonical system of generators. Let us also take $a_1<\dots<a_r$ a minimal system of generators of $\H$. Considering the polynomial rings $\k[X_{n_0},\cdots,X_{n_{g-1}}]$ and $\k[X_{a_1},\dots,X_{a_r}]$, we attach to the variable $X_k$ the degree $\deg(X_k)=k$ and then $\deg(X_k^{\alpha})=\alpha\cdot\deg(X_k)$. The map  

$$\begin{array}{cccc}
\Pi: & \k[X_{n_0},\dots,X_{n_{g-1}}]  & \longrightarrow & \k[X_{a_1},\dots,X_{a_r}] \\
    & X_{n_0} & \longmapsto & 1\\
    & X_{n_i} & \longmapsto & \mathbf{X}^{\boldsymbol{\alpha}}_{n_i}
\end{array},$$ where $\mathbf{X}^{\boldsymbol{\alpha}}_{n_i}$ is the monomial in the variables $X_{a_i}$ introduced in above section \ref{phermitian},
is a graded homomorphism between $\k[X_{n_0},\cdots,X_{n_{g-1}}]$ and $\k[X_{a_1},\dots,X_{a_r}]$. Henceforward, the \textit{shrinking map} stands to this homomorphism $\Pi$.

\subsection{Proof of the Main Theorem}

Let $\H$ be a numerical symmetric non-hyperelliptic semigroup of genus $g>1$. Then $\H$ is realized as the Weierstrass semigroup of the canonical monomial curve $\C^{(0)}$ at the point $P=(0:\cdots:0:1)$. Considering the affine open chart $X_0=1,$ the parametrization of $\C^{(0)}|_{X_0=1}$ is given by $$\C^{(0)}|_{X_0=1}=\left\{(t^{n_1},t^{n_2},\cdots,t^{n_{g-1}})| \, t\in\mathbb{A}^1\right\}\subset\mathbb{A}^{g-1}.$$ 
On the other hand, let $a_1,\cdots,a_r$ be the minimal system of generators of $\H$ and consider the affine monomial curve $\mathcal{C}_S=\mathrm{Spec}\,\k[S]$.
Then $\C_S\simeq \C^{(0)}|_{X_0=1}=\C^{(0)}\setminus P$ because their coordinate rings are both $\k[\H]$. As the projetivization of $\C^{(0)}\setminus P$ and $\C_S$ consists by adding a single point $P=(0:\cdots:0:1)\in\mathbb{P}^{g-1}$ and $Q=(0:\cdots:0:1)\in\mathbb{P}^r$, respectively, at infinity, we conclude that the projetivization of this two curves are also isomorphic. Since we do not lose information on the coefficients of the unfolded quadratic forms that generates the monomial curve $\C^{(0)}$, we can adapt Stoehr's construction to provided a compactification of $\MS$ using the monomial affine curve $\C_S$ instead of the canonical one $\C^{(0)}$. To do this we shrink all the forms that are
involved in Stoehr's construction, in particular the $P-$hermitian basis and the unfolded quadratic forms.

Let us fix an algebraic closed field $\k$  of arbitrary characteristic. In order to prove Theorem B, we first shall prove the following theorem:

\begin{thm}\label{Teo1}
Let $\C$ a non-hyperelliptic Gorenstein curve defined over $\k$ and $\H$ a complete intersection numerical semigroup. Then $\C$ realizes $\H$ at a smooth point $P\in\C$ if and only if there is an embedding of $\C$ into $\mathbb{P}^r$ such that the defining equations of $\C\setminus P$ are given by the unfolding of the $r-1$ defining equations of $\C_S\subseteq\mathbb{A}^{r}$.
\end{thm}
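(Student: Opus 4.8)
The plan is to prove both implications by transporting Stoehr's canonical construction (Theorem \ref{TeoDoKarl}) to the affine model through the shrinking map $\Pi$, using the complete intersection hypothesis to pin down the number and the independence of the defining equations. For the forward implication, I would assume $\C$ is a non-hyperelliptic Gorenstein curve realizing $\H$ at the smooth point $P$ and first embed $\C$ canonically in $\proj^{g-1}$; by Theorem \ref{TeoDoKarl} its ideal is generated by the unfolded quadratic forms $F_{si}$. The affine ring $\bigcup_n H^0(\C,\O_{\C}(nP))$ equals $\k[\H]$, and choosing the minimal system $a_1,\dots,a_r$ produces regular functions $x_{a_1},\dots,x_{a_r}$ of pole orders $a_i$ at $P$ that generate this ring, hence embed $\C\setminus P$ in $\mathbb{A}^r$ and $\C$ in $\proj^r$ with $P$ at infinity. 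Applying $\Pi$, and using that $\Pi(X_{n_i})=\mathbf{X}^{\balpha}_{n_i}$ is exactly the monomial expressing $x_{n_i}$ in the minimal coordinates, I would show that the canonical unfolded forms $F_{si}$ are carried to relations among $x_{a_1},\dots,x_{a_r}$ whose top-weight parts are the Herzog binomials \eqref{eqherz} and whose lower-weight parts are the unfolding corrections of \eqref{defunfold}. Since $\H$ is a complete intersection, $I(\C_S)$ is minimally generated by exactly $r-1$ Herzog binomials; the corresponding $r-1$ shrunk forms are their unfoldings, and the codimension count (codimension $r-1$ in $\mathbb{A}^r$) forces them to generate the whole ideal of $\C\setminus P$.

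For the reverse implication I would assume $\C\setminus P\subseteq\mathbb{A}^r$ is cut out by the unfoldings $H_s=H^{(0)}_s+\sum_{j<s}c_{sj}\mathbf{X}^{\boldsymbol{\gamma}}_{j}\vert_{X_{a_0}=1}$ of the $r-1$ Herzog equations. Each $H_s$ is isobaric, with the correction terms lying strictly in negative degree, so the filtration by pole order degenerates the curve to its special fiber $\C_S=\Spec\k[\H]$, which realizes $\H$ at a smooth point. I would then expand via $\Pi$, re-lifting each $x_{a_i}$ to the canonical coordinate $X_{n_i}$, to recover precisely the unfolded quadratic forms of Stoehr's construction, and invoke the converse direction of Theorem \ref{TeoDoKarl} to conclude that these cut out a canonical integral Gorenstein curve with $P$ smooth and Weierstrass semigroup $\H$. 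The smoothness of $P$ and the value $\H$ of the semigroup are preserved because the isobaric structure makes the pole-order filtration on $\O_{\C,P}$ agree, in the associated graded, with that of $\C_S$.

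The hard part will be establishing that $\Pi$ sets up a faithful, coefficient-preserving bijection between the two families of unfolded equations: I must check that shrinking the canonical forms $F_{si}$ yields exactly the unfoldings of the $r-1$ Herzog binomials with no loss of information on the coefficients $c_{sin}$, and that expanding is its inverse. This is where the complete intersection hypothesis is indispensable. It guarantees both that the affine ideal is minimally generated by precisely $r-1$ binomials, so that the generator count matches the codimension, and that no syzygetic relations among the unfolded coefficients arise beyond the normalizations already built into Stoehr's construction. Controlling these normalizations compatibly under $\Pi$, so that the freedom in the $\alpha_{ij}$ coordinate changes and the residual $\mathbb{G}_m$-action match on both sides, is the delicate bookkeeping on which the whole equivalence rests.
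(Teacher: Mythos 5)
Your overall strategy --- transporting Stoehr's construction to the affine model via the shrinking map $\Pi$ and letting the complete intersection hypothesis control the syzygies --- is the same as the paper's, but you stop short of the one step that actually makes the argument work. You correctly identify ``no syzygetic relations among the unfolded coefficients arise beyond the normalizations'' as the crux, yet you offer no mechanism for it; you simply attribute it to the complete intersection hypothesis and defer it as ``delicate bookkeeping.'' The paper's proof supplies the mechanism: since the $r-1$ binomials $G_{kj}^{(0)}$ form a regular sequence, the Koszul complex shows that \emph{every} relation among them is generated by the trivial (Koszul) relations. Hence when each syzygy $X_{2g-2}F^{(0)}_{s'i'}+\sum \epsilon_{nsi}^{(s'i')}X_nF^{(0)}_{si}=0$ from Lemma \ref{syzygylem} is pushed down by $\Pi$ to a relation $\sum_j M_{s'i'j}G_{kj}^{(0)}=0$, that relation is trivial, and substituting the unfolded forms $G_{kj}$ for the $G_{kj}^{(0)}$ therefore imposes no conditions: the obstruction equations $\varrho_{s'i'm}=0$ of Theorem \ref{TeoDoKarl} are identically satisfied. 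Without this Koszul argument your reverse implication is an assertion, not a proof.

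There is a second, related problem with how you propose to run the reverse direction. You want to ``expand via $\Pi$, re-lifting each $x_{a_i}$ to the canonical coordinate $X_{n_i}$'' and then invoke the converse of Theorem \ref{TeoDoKarl}. But $\Pi$ is a non-injective graded surjection with no canonical section, so ``expanding'' is not a well-defined inverse operation; and even granting a lift, Theorem \ref{TeoDoKarl} does not conclude anything until you have \emph{verified} that the lifted coefficients satisfy $\varrho_{s'i'm}=0$ --- which is exactly the point at issue. The paper sidesteps both difficulties by pushing the syzygies \emph{forward} under $\Pi$ into the affine ring, where triviality is visible, rather than pulling the equations back. (A smaller quibble: in the forward direction your codimension count --- $r-1$ equations cutting out a codimension $r-1$ curve --- shows the shrunk forms cut out $\C\setminus P$ set-theoretically but not by itself that they generate its ideal; the paper instead obtains the forward direction from Pinkham's quasi-cone construction, realizing $\C\setminus P$ as a fiber of a deformation of $\C_{\H}$ and quoting that all such deformations are unfoldings.)
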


\begin{proof}Let $\C$ be a complete integral Gorenstein curve and $P$ be a smooth point on $\C$ whose Weierstrass semigroup is equal to $\H$. Let us take the line  bundle $\mathcal{L}=\mathcal{O}_{\C}(P)$ and its associated
ring of sections $\mathcal{R} = \oplus_{i=0}^\infty H^0(\C,\mathcal{L}^i)$.
Since we are fixing a minimal system of generators for $\H$, the ring $\mathcal{R}$ induces an embedding of $\C= \mathbb{P}(\mathcal R)$ in a weighted projective space, with coordinates $Y_{0},\dots, Y_{r}$ with $\deg Y_{0} = 0$. 
The space $\Spec  \mathcal{R}$
is the corresponding quasi-cone in affine space. Setting $Y_0 = 0$ defines the monomial curve $\C_\H$ and all other fibers are isomorphic to $\C\setminus P$. In particular, $\C\setminus P$ is obtained by a deformation of $\C_{\H}$, as predicted by Pinkham's construction. Since  every deformation of $\C_{\H}$ that realizes $\H$ at an added point at the infinity is obtained by unfolding the defining equations of $\C_{\H}$, c.f. Theorem \ref{TeoDoKarl}, we are done.

Conversely, let $\mathcal{D}$ be an affine curve in $\mathbb{A}^r$ that is given by the unfold of the regular sequence $G_{kj}^{(0)}\in \k[X_{a_1},
\dots,X_{a_r}]$ that generates the ideal of $\C_{\H}$, where each $G_{kj}^{(0)}$ is an isobaric polynomial of degree $k$. So
the ideal of $\mathcal{D}$ is given by ${G_{kj}}=G_{kj}^{(0)}+
\sum_i e_i\beta_i,$ with $\beta_i\in\Gamma_2$, where $\Gamma_2$ is the shrink $P$-hermitian basis of $H^0(\C^{(0)},\omega^2)$ fixed in \eqref{basewn}, and $e_i\in \k$.

Following Stoehr's construction, a curve is in $\overline{\MS}$ if and only if satisfies some quasi-homogeneous equations $\rho_{s'i'n}=0$ come from
suitable syzygies of the generators $F_{si}^{(0)}$ of the affine monomial curve $\C^{(0)}$, c.f. Theorem \ref{TeoDoKarl}. Now, the Syzygy Lemma \ref{sygies} assures the existence of $\frac{1}{2}(g-2)(g-5)$ syzygies of the form $$\H_{s'i'}:= X_{2g-2}F^{(0)}_{s'i'}+\sum_{nsi}\epsilon_{nsi}^{(s'i')}X_nF^{(0)}_{si}=0.$$
Taking the image of theses syzygies under the shrink map $\Pi$, we get
$$\Pi(S_{s'i'})= \sum_{j=1}^{r-1}M_{s'i'j}G_{kj}^{(0)}=0,$$ where $M_{s'i'j}\in \k[X_{a_1},\dots,X_{a_r}]$ are isobaric polynomials of weight $\H'-k$. Using Koszul complex we observe that the relations between the generators $G_{ki}^{(0)}, i=1,\dots,r-1$, must be trivial, because $\C_{\H}$ is a complete intersection. Thus when we exchange $G_{kj}^{(0)}$ by the unfold ${G_{kj}},$ the relations $\varrho_{s'i'm}=0$ between the coefficients given by \ref{TeoDoKarl} are trivially satisfied. Then the projetivized unfolds of the forms that generates $\mathcal{D}$ cut out an integral curve in $\mathbb{P}^r$ with Weierstrass semigroup $\H$ in $Q(0:\dots:0:1)$. \end{proof}

\begin{proof}[Proof of the Main Theorem] 
By virtue of the above Theorem \ref{Teo1}, an integral curve is in $\overline{\MS}$ if
and only if it is given by the unfolding of the regular sequence of the complete intersection
monomial curve $\C_{\H}$. Then the space $\overline{\MS}$ is just determined by the coefficients of the unfolded forms. The coordinate functions $x_n$, for $n\in \H$, were choosing as functions with pole divisors $nP$, so they are not uniquely determined. Hence we are able to do the following changes of variables
$$X_n\mapsto X_n + \sum_{m=0}^{n-1}d_{nm}X_m,$$ where the coefficients $d_{nm}$ are constant. As there are $r$ minimal generators in $\H$ we can normalize $\frac{1}{2}r(r+1)$ coefficients with weights determined in the unfolds of the generator polynomials of the complete intersection affine curve, provided the characteristic of the ground field $\k$ is zero or not a prime divisor of any exponent of the defining equations of $\C_{\H}$. After these normalizations, the only change we can make is to transform $x_{a_i}\mapsto c^{a_i}x_{a_i}, i=1,\dots,r-1$, for some $c\in\mathbb{G}_m(\k)=\k^*$. According to \cite[Appendix]{St93} the coefficients of the normalized unfolded polynomials form a basis for the negatively-graded part of the first cohomology module of the cotangent complex $\mathbf{T}^{1,-}(\mathbf{k}[\H])$. Hence we conclude that $\overline{\MS}=\mathbb{P}(\mathbf{T}^{1,-}(\mathbf{k}[\H]).$
\end{proof}

\subsection{Examples}
\begin{exam}\label{exam1}
We start with a simple example in codimension $2$. Given a positive integer $\tau$, consider the semigroup
$$\H=<4,3+4\tau,6+4\tau> = 4\mathbb{N}\sqcup (3+4\tau+4\mathbb{N}) \sqcup (6+4\tau+4\mathbb{N}) \sqcup (9+8\tau+4\mathbb{N}),$$
 of genus $g=3+4\tau$ and whose Frobenius number is $\ell_g=5+8\tau =2g-1$, so $\H$ is symmetric. Consider the affine monomial curve $$\C_{\H}:=\{(t^4,t^{3+4\tau},t^{6+4\tau}), t\in \k\}\subset
   \mathbb{A}^3 $$
and $\C^{(0)}\subset \mathbb{P}^{g-1}$
the canonical monomial curve where $P=(0:\cdots:0:1)$ realizes $\H$. Let $\{x_0,x_{n_1},\cdots,x_{n_{g-1}}\}$ be a basis for $\mathrm{H}^{0}(\C^{(0)},\mathcal{O}(P))$. For short we use 
$$x:=x_4, \quad y_3:=x_{3+4\tau}\quad \mbox{and} \quad y_6:=x_{6+4\tau}. $$
Thus a $P-$hermitian base of $H^{0}(\C^{(0)}, \omega)=H^{0}(\C^{(0)},\mathcal{O}(4+8\tau)P)$ is given by
$$\left\{\begin{array}{l}
x^0, x,\cdots, x^{2\tau+1}\\
x^0y_3, xy_3,\cdots,x^{\tau}y_3\\
x^0y_6, xy_6,\cdots,x^{\tau-1}y_6
\end{array} \right., \qquad \tau\geq 1$$

\noindent We can consider $y_9:=x_{9+8\tau}$ as the product $y_3y_6$. Hence the $P-$hermitian basis for the bicanonical divisor $H^0(\C^{(0)}, \mathcal{O}(8+16\tau))$ is given by the $3g-3$ elements
$$\left\{
\begin{array}{l}
x^0,\cdots,x^{2+4\tau}\\
x^0y_3,xy_3,\cdots, x^{1+3\tau}y_3\\
x^0y_6,xy_6,\cdots, x^{3\tau}y_6\\
x^0y_3y_6,xy_3y_6,\cdots,x^{2\tau-1}y_3y_6
\end{array}\right.$$

\noindent Lifting the $P-$hermitian basis elements, we attach the variables $x,y_3$ and $y_6$ to $X, Y_3$ and $Y_6$ of weights $4,3+4\tau$ and $6+\tau$, respectively. For short we use $$Z_{4i}:=X^i, \quad Z_{j+4\tau+4i}:=X^iY_j, \quad Z_{9+8\tau+4i}:=X^iY_3Y_6.$$
\noindent As the curve $\mathcal{D}$ is complete intersection, there are two polynomials in $\k[X,Y_3,Y_6]$ that vanishing in $\mathcal{D}$ and generates its ideal. They are:
$$G_{1}=Y_3^2-Y_6X^{\tau} \hspace*{1cm} \mbox{ and} \hspace*{1cm} G_{2}:=Y_6^2-X^{3+2\tau}.$$

\noindent  The unfolds of the above polynomials are
$$\widetilde{G}_{1}=Y_3^2-Y_6X^{\tau}-\displaystyle\sum_{j=1}^{6+8\tau}a_jZ_{6+8\tau-j}  \ \ \mbox{and} \ \
\widetilde{G}_{2}:=Y_6^2-X^{3+2\tau}-\displaystyle\sum_{j=k}^{12+8\tau}b_kZ_{12+8\tau-k},$$

\noindent where the sums vary between the positive integers $j$ and $k$ such that $6+8\tau-j \in \H$ and $12+8\tau-k\in \H$. Doing the variable changes of the form

$$\begin{array}{lcl}
X & \longmapsto & X+\alpha_4\\
Y_3 & \longmapsto & Y_3+\beta_{3+4(\tau-1)} X+\beta_{3+4\tau}\\
Y_6 & \longmapsto & Y_6+\gamma_{3}Y_3+\gamma_{2+4\tau}X+\gamma_{6+4\tau} 
\end{array}$$

\noindent we can normalize $6$ coefficients of the unfolded forms to zero, provided that the characteristic of $\k$ is zero or a odd prime that not divides $\tau$. The unfold of the polynomials $G_1$ and $G_2$ have $3+4\tau$  and $9+3\tau$ coefficients, respectively. Then the parameter space depends on $6+7\tau$ coefficients, i.e. $$\overline{\MS}\simeq\mathbb{P}^{5+7\tau}.$$

\noindent In the particular case $\tau=1,$ we have $\H=<4,7,10>$ and $g=7$. The canonical ideal of the monomial curve $\C^{(0)}$ is generated by $10$ quadratic forms, namely
$$\begin{array}{lll}
F_{8,1}^{(0)}=X_4^2-X_0X_8  & \hspace*{1.5cm}  F_{11,1}^{(0)}= X_4X_7-X_0X_{11} \hspace*{1.5cm} & F_{12,1}^{(0)}= X_4X_8-X_0X_{12}\\
 F_{14,1}^{(0)}=X_7^2-X_4X_{10} &\hspace*{1.5cm} F_{15,1}^{(0)}= X_7X_8-X_4X_{11} \hspace*{1.5cm}&  F_{16,1}^{(0)}=X_8^2-X_4X_{12}\\
 F_{18,1}^{(0)}=X_8X_{10}-X_7X_{11} & \hspace*{1.5cm} F_{19,1}^{(0)}=X_8X_{11}-X_7X_{12}\hspace*{1.5cm} & F_{20,1}^{(0)}=X_{10}^2 -X_8X_{12}\\
& \hspace*{1.5cm} F_{22,1}^{(0)}= X_{11}^2-X_{10}X_{12} \hspace*{1.5cm}&
\end{array}$$
The only syzygy coming from the Syzygy Lemma \ref{syzygylem} is $$X_{12}F^{(0)}_{14,1}-X_{10}F^{(0)}_{16,1}+X_7F^{(0)}_{19,1}-X_8F^{(0)}_{18,1}=0.$$ Applying the shrinking map, i.e. considering this syzygy in $\k[X_4,X_7,X_{10}]$, we have the trivial syzygy $$X_4^3(X_7^2-X_4X_{10})-X_4^3(X_7^2-X_4X_{10}) = X_4^3 F_{14,1}^{(0)}- X_4^3 F_{14,1}^{(0)}=0$$ Thus, as there are no non-trivial syzygies, the space of parameters depends on 13 coefficients, and $$\overline{\MS}\simeq\mathbb{P}^{12}.$$ 
\end{exam}

\begin{exam}\label{exam2}
  Let us now consider an example in codimension 4.  For each $\tau>0$, consider the semigroup
  $\H=<16,1+16\tau,2+16\tau,4+16\tau,8+16\tau>$, whose genus is
  $32\tau$ and Frobenius number $\l_{g}=64\tau-1$. The affine monomial curve
  $$\mathcal{D}=\{(t^{16},t^{1+16\tau},t^{2+16\tau},t^{4+16\tau}, t^{8+16\tau})| t\in\Bbbk\}\subset\mathbb{A}^5 $$ is a complete intersection
  in $\mathbb{A}^4$, its ideal is generated by
  $$G_1=Y_1^2-Y_2X^{\tau} \hspace*{1cm} G_2=Y_2^2-Y_4X^{\tau} \hspace*{1cm}G_3=Y_4^2-Y_8X^{\tau} \hspace*{1cm} G_4=Y_8^2-X^{3\tau}, $$
  where  $X:=X_{16},\ Y_1:=Y_{1+16\tau},\ Y_2:=Y_{2+16\tau}, \ Y_4:=Y_{4+16\tau} \mbox{ and }  Y_5:=Y_{5+16\tau}.$ Unfolding the defining polynomials of $\mathcal{D}$ we get
\begin{equation*}
    \begin{array}{ll}
      \noindent \widetilde{G}_{1}=Y_1^2-Y_2X^{\tau}-\displaystyle\sum_{j=1}^{2+32\tau}a_jZ_{2+32\tau-j}, &
\widetilde{G}_{2}=Y_2^2-Y_4X^{\tau}-\displaystyle\sum_{j=k}^{4+32\tau}b_kZ_{4+32\tau-k}  \\
   \widetilde{G}_{3}=Y_4^2-Y_8X^{\tau}-\displaystyle\sum_{j=u}^{8+32\tau}c_uZ_{8+32\tau-u} \mbox{ and }
 & \widetilde{G}_{4}=Y_8^2-X^{3\tau}-\displaystyle\sum_{j=v}^{16+32\tau}d_vZ_{16+32\tau-v},
    \end{array}
\end{equation*}
with $$\begin{array}{lll}
Z_{16i}:=X^i  & Z_{1+16\tau+8i}=X^iY_1  & Z_{2+16\tau+8i}=X^iY_2  \\ Z_{3+32\tau+8i}=X^iY_1Y_2 &
Z_{4+16\tau+8i}=X^iY_4 & Z_{5+32\tau+8i}=X^iY_1Y_4 \\ Z_{6+32\tau+8i}=X^iY_2Y_4 & Z_{7+48\tau+8i}=X^iY_1Y_2Y_4 &
Z_{8+16\tau+8i}=X^iY_8 \\ Z_{9+32\tau+8i}=X^iY_1Y_8 & Z_{10+32\tau+8i}=X^iY_2Y_8 & Z_{11+48\tau+8i}=X^iY_1Y_2Y_8\\
Z_{12+32\tau+8i}=X^iY_4Y_8 & Z_{13+48\tau+8i}=X^iY_1Y_4Y_8 & Z_{14+48\tau+8i}=X^iY_2Y_4Y_8 \\ Z_{15+64\tau+8i}=X^iY_1Y_2Y_4Y_8
\end{array}$$
We can normalize $15$ coefficients from the unfolding polynomials using 
$$\begin{array}{lcl}
X & \longmapsto & X+\alpha_{16}\\
Y_1 & \longmapsto & Y_1+\beta_{-15+16\tau}X+\beta_{1+16\tau}\\
Y_2 & \longmapsto & Y_2+\gamma_{1}Y_1+\gamma_{-14+16\tau}X+\gamma_{2+16\tau} \\
Y_4 & \longmapsto & Y_4+\theta_2Y_2+d_3Y_1+\theta_{-12+16\tau}X+\theta_{4+16\tau}\\
Y_8 & \longmapsto & Y_8+\mu_4Y_4+\mu_6Y_2+\mu_7Y_1+\mu_{-8+16\tau}X+\mu_{8+16\tau}
\end{array}$$
Hence, counting coefficients we can conclude that $\overline{\MS}\simeq\mathbb{P}^{8+24\tau}$. 
\end{exam}

The GAP System's semigroup package simplifies finding complete intersection numerical semigroups, like $\H=<32,33,34,36,40,48>$ of genus $g=80$. Following the procedure presented here, verifying $\overline{\MS} \simeq \mathbb{P}^{53}$ becomes straightforward. For any such semigroup $\H$, a family like $\H=<32,1+32\tau,2+32\tau,4+32\tau,8+32\tau,16+32\tau>$ ($\tau\geq 1$)  can be considered. Our procedure readily adapts to any family member, as shown in Examples \ref{exam1} and \ref{exam2}.

\bigskip

\parbox[t]{3in}{{\rm Andr\'e Contiero}\\
{\tt \href{mailto:contiero@ufmg.br}{contiero@ufmg.br}}\\
{\it Universidade Federal de Minas Gerais}\\
{\it Belo Horizonte, MG, Brazil}} \hspace{0.9cm}
\parbox[t]{3in}{{\rm Sarah Mazzini}\\
{\tt \href{mailto:sarahmazzini@ufu.br}{sarahmazzini@ufu.br  }}\\
{\it Universidade Federal de Uberl\^andia}\\
{\it Uberl\^andia, MG, Brazil}} \hspace{0.9cm}

\end{document}